\numberwithin{equation}{section}
\newcommand{\R}{\mathbf R}
\newcommand{\F}{\mathcal F}
\newcommand{\Y}{\mathcal Y}
\newcommand{\B}{{\mathcal B}}
\renewcommand{\H}{\mathcal H}
\renewcommand{\L}{\mathcal L}
\newcommand{\inner}[2]{\left\langle{#1},{#2}\right\rangle}
\newcommand{\inter}{\cap}
\renewcommand{\Re}{\mathop{Re}}
\newtheorem{theorem}{Theorem}[section]
\newtheorem{lemma}[theorem]{Lemma}
\begin{document}


\title{The Adaptive LQ Regulator}  

\author{
	\name{Omar Hijab
	}
	\affil{Department of Mathematics, Temple University,  Philadelphia, PA.}
	\thanks{E-mail: hijab@temple.edu}
}

\maketitle    

\begin{abstract}                         
The optimal adaptive control of  a linear system in a signal-plus-noise setting with infinite horizon LQ regulator cost is studied. The class of partially observed linear systems for which the certainty equivalence property holds is identified.  It is also shown that a linear system is adaptively stabilizable if and only if it is uniformly stabilizable, and the class of partially observed linear systems for which the certainty equivalence value function is a supersolution of the Bellman equation is identified.
\end{abstract}

\begin{keywords}                          
Adaptive control, Bellman equation, certainty equivalence, entropy,  LQ regulator,   nonlinear filtering        
\end{keywords}   

\begin{amscode}
93E20, 60G35, 93E35, 34H15, 49L25
\end{amscode}

\section{Introduction}
\label{intro}

Adaptive control \cite{AK, BGW, GS, IS, KKK, KV, NA, SB, T} is the control of systems in the presence of system parameter uncertainty, which we assume is the result of observations $y$ of a signal $z$ corrupted by noise $w$, in a signal-plus-noise setting.

Adaptive control problems may be formulated in a non-Bayesian context, with no prior distribution on the unknown system parameter $\theta$ or the noise $w$, or in a Bayesian context, where there is a prior distribution. 

In the former case, one designs control laws using a maximum likelihood estimate $\hat\theta(t)$  given past observations $y(s)$, $0\le s\le t$. In the latter case,  one designs control laws using the conditional probabilities
\begin{equation}\label{condprob}
p_j(t)=P(\theta=j \mid y(s),0\le s\le t)
\end{equation}
that the system is in regime $j$, based on past observations. These are computed via Bayes' rule, which then expresses the conditional expectation $\hat\theta(t)$  as an average likelihood estimate.

A certainty equivalent control law is one designed in two stages: resolving regime uncertainty followed by the application of a control law corresponding to a known regime. In the former case, this means implementing the optimal control law corresponding to the parameter estimate $\hat\theta(t)$, and, in the latter case, implementing the optimal control law corresponding to a given parameter, averaged over all parameter values. The certainty equivalence property is the fortuitous occurence when the certainty equivalent control law turns out to be optimal, or stable, or both, for the uncertain system.

The adaptive LQ regulator, a generalization of the LQ regulator \cite{AM, DPB, RWB, CTC, REK, S}, is an optimal control problem with linear dynamics and quadratic cost, where the system matrices, collectively denoted $\theta$, are  not known to the controller, and thus the state $x(t)$ is partially observed. 
 
Any partially observed optimal control problem is equivalent \cite{MHAD, FP, KO, PR} to a completely observed optimal control problem with a much larger state space. For the adaptive LQ regulator,  the equivalent completely observed problem has nonlinear dynamics and non-quadratic cost.

We assume there are only a finite number $N$ of  operating regimes. Then the equivalent completely observed problem has state
\begin{equation}\label{dyna}
(x_1(t),\dots,x_N(t),p_1(t),\dots,p_N(t)),
\end{equation}
where $x_j(t)$ is the system state in regime $j$, $p_j(t)$ is as above, and optimal controls are determined by dynamic programming, which in the current setting reduces to the Bellman differential equation.

In this paper, we isolate a class of linear systems for which the adaptive LQ regulator is completely and explicitly solvable, and where the certainty equivalence property holds. 
We also clarify the link between adaptive stabilizability, uniform stabilizability, and supersolutions of the Bellman equation.

By introducing an entropy factor into the cost functional, we identify the class of linear systems for which the certainty equivalence property holds. Since minimizing entropy is equivalent to maximizing information, introducing this factor is natural. In fact, this introduction of the entropy is a quantitative expression of dual control \cite{YBS, F}, wherein the controller attempts to  directly affect both the system state and the system uncertainty. 
The fact that the factor then facilitates the explicit solvability of the Bellman  equation is the surprise. This approach is then extended to supersolutions of the Bellman equation, leading to stabilizing feedback controls. 

That the standard LQ regulator has a linear optimal feedback law is a consequence of the fact that the Bellman equation of the LQ regulator is explicitly solved by the quadratic function $\inner{Kx}{x}/2$, where the Kalman gain $K$ is given by the Ricatti equation.
 
In the same way, ultimately our analysis rests on explicit solutions of the Bellman equation of the adaptive LQ regulator, through the above quadratic function and the simple calculus fact
\begin{equation}\label{calculusfact}
H(p)=-\sum_{j=1}^N p_j\log p_j\quad\Rightarrow\quad -\frac{\partial^2H}{\partial p_j\partial p_k}=\frac{\delta_{jk}}{p_j}.
\end{equation}
Nevertheless, partially observed problems are different enough from completely observed problems to warrant a careful formulation and re-working of the optimal control problem in the partially observed setting.

Another generalization of the LQ regulator is considered in \cite{AJK}. Here the system dynamics matrices are perturbed by additive  noise, resulting  in a completely observed optimal control problem whose Bellman equation can be solved by a quadratic function given by an extended Ricatti equation.

In \cite{OH}, the entropy was introduced to obtain explicit solutions to the Bellman equation, but the class of linear systems for which the certainty equivalence property holds was not identified, and supersolutions were not considered. 
Moreover the formulation there focused on the sample space of $(\theta,w)$, where $w$ is the observation noise. The approach here, with the focus on the sample space of $(\theta,y)$, simplifies the derivation of the filtering equations and expresses the problem in its natural setting, leading to optimal results.

\section{Overview}
\label{overview}

Let $(A_j,B_j,C_j)$, $j=1,\dots,N$, be minimal (\S\ref{lqr}) linear systems and let  $x_j$, $j=1,\dots,N$, be initial states. Let $S^N$ denote the set of probabilities $p=(p_1,\dots,p_N)$ on $\{1,\dots,N\}$, let
$p$ be in $S^N$, and let $\theta$ be a random variable valued in $\{1,\dots,N\}$  and distributed according to $p$.

Let $\R^i$ and $\R^o$ be the euclidean spaces where inputs $u$ and outputs $y$ take their values. We do not specify explicitly the euclidean space where the state $x$ takes its values, and the state space dimension may vary with $j$. 

Throughout, $\inner{\upsilon}{w}$ denotes the dot product of vectors $\upsilon,w$ in euclidean space, $|\upsilon|^2=\inner{\upsilon}{\upsilon}$ denotes the length squared, $M^*$ is the transpose of $M$, $|M|$ is the norm of $M$, and $\Re M =(M+M^*)/2$.

The system dynamics are
\begin{equation}\label{stateeq1}
\dot x(t) =  A_\theta x(t)+B_\theta u(t),\qquad x(0)=x_\theta,
\end{equation}
the observations are given by the signal-plus-noise model
\begin{equation}\label{obser1}
y(t) = \int_0^tG_\theta x(s)\,ds+w(t),
\end{equation}
and the cost starting from $(x,p)=(x_1,\dots,x_N,p_1,\dots,p_N)$ corresponding to $u$ is
 \begin{equation}\label{JT1}
J(x,p,u)=\frac12E^P\left(\int_0^\infty \left(|C_\theta x|^2+|u(t)|^2\right)\,dt\right).
\end{equation}
Here $z(t)=G_\theta x(t)\in\R^o$ is the signal, the noise $w$ is a Wiener process independent of $\theta$,  and $E^P$ is the expectation against the underlying statistics $P$.  We assume the system is  {\em partially observed} in the sense that the  control $u(t)\in\R^i$ depends only on past observations $y(s)$, $0\le s\le t$, for each $t\ge0$, and we seek controls minimizing $J$. This is in contrast with the completely observed case where $u(t)$ is allowed to depend also on $\theta$. This partially observed optimal control problem is the adaptive LQ regulator.

Let $K=K(A,B,C)$ be the unique positive definite solution of the Ricatti equation
\begin{equation}\label{algricatti}
0=C^*C+A^*K+KA-KBB^*K
\end{equation}
corresponding to  the minimal triple $(A,B,C)$.
When $\theta\equiv j$, the partially observed problem is the standard LQ regulator. Here the optimal cost starting from $x_j$ is $U_j(x_j)=\inner{K_jx_j}{x_j}/2$ where $K_j=K(A_j,B_j,C_j)$,  $\bar A_j=A_j-B_jB^*_jK_j$ is stable, and the optimal control is given by the feedback $u(t)=-B^*_jK_jx(t)$. It follows that, if we allow controls in \eqref{stateeq1}, \eqref{obser1}, \eqref{JT1} 
to depend also on $\theta$, the optimal cost is
$$U_{ce}(x,p)=\frac12 \sum_{j=1}^N p_j \inner{K_jx_j}{x_j}=\frac12E^P(\inner{K_\theta x_\theta}{x_\theta}),$$
and the optimal control satisfies
\begin{equation}\label{compobs}
u(t)=-B^*_\theta K_\theta x(t).
\end{equation}

For the partially observed problem, things can go wrong even in the simplest cases. For example, suppose $\theta=\pm1$ and consider the scalar integrator
$$
\dot x(t) = \theta u(t),\qquad x(0)=x_\theta\in\R, 
$$
with zero signal, $G_\theta\equiv0$. Then we may restrict to deterministic controls (see \eqref{determ} below) and the cost $J(x_+,x_-,p,u)$ reduces to
$$\frac12\int_0^\infty (px_+(t)^2+(1-p)x_-(t)^2+u(t)^2)\,dt.$$
Here $x_+=x_+(0),x_-=x_-(0)$ are the two possible initial states and  $p=P(\theta=1)$. Let $U(x_+,x_-,p)=\inf_uJ(x_+,x_-,p,u)$ be the value function. The optimal feedback when $p(1-p)=0$ is $u(t)=-\theta x(t)=-e^{-t}\theta x_\theta$. This control is deterministic iff $\theta x_\theta$ is deterministic.  More generally, if $u$ is any deterministic control with finite cost, $J(x_+,x_-,p,u)<\infty$, then, as we will see, $u$ is stabilizing, $x(\infty)=0$, hence 
$$0=\theta x(\infty)=\theta x_\theta+\int_0^\infty u(t)\,dt,$$ 
which implies $\theta x_\theta$ is deterministic. Thus, if $x_+\not=-x_-$ and $J(x_+,x_-,p,u)<\infty$, then  $p(1-p)=0$. 
If $x_+=-x_-$, then $\theta x_\theta$ is deterministic and $u(t)=-\theta x(t)$ is deterministic. 
Since this $u$ is the optimal LQ regulator feedback, we conclude 
\begin{equation*}
U(x_+,x_-,p) =
\begin{cases}
\frac12(px_+^2+(1-p)x_-^2), & p=0,1\text{ or }x_+=-x_-,\\
 \infty, & p\not=0,1\text{ and }x_+\not=-x_-.
 \end{cases}.
\end{equation*}
This system is strongly not stabilizable.

In the positive direction, we have the following results: The linear system ensembles $(A_j,B_j,C_j,G_j)$, $j=1,\dots,N$,  for which the certainty equivalence property holds for the adaptive LQ regulator are those satisfying
\begin{equation}\label{explicit}
 FG_j=B_j^*K_j,\qquad j=1,\dots,N,
\end{equation}
for some linear feedback $F:\R^o\to\R^i$. 

Given $f=f(x_1,\dots,x_N,p_1,\dots,p_N)$, let  
\begin{equation}\label{gen}
\L f= \frac12\sum_{j,k=1}^N\frac {\partial^2f}{\partial p_j\partial p_k}\,p_jp_k\inner{z_j-\hat z}{z_k-\hat z},
\end{equation}
where $z_j=G_jx_j$, and $\hat z=\sum_{j=1}^Np_jz_j$. Let $D_j=\nabla_jf$ denote the gradient with respect to $x_j$, $D=(D_1,\dots,D_N)$, $\nabla f=(\nabla_1f,\dots,\nabla_Nf)$, 
and let 
\begin{equation*}
\begin{split}
\H(x,p,D)&=\min_{u\in\R^i}\left(\frac12|u|^2+ \sum_{j=1}^N \frac12p_j|C_jx_j|^2 +\inner{D_j}{A_jx_j+B_ju}\right) \\
&=  \frac12\sum_{j=1}^N p_j|C_jx_j|^2 + \sum_{j=1}^N\inner{D_j}{A_jx_j}
- \frac12\left|\sum_{j=1}^NB_j^*D_j\right|^2. 
\end{split}
\end{equation*}
Then the Bellman equation of the adaptive LQ regulator is
\begin{equation}\label{bellman1}
-\L f=\H(x,p,\nabla f),
\end{equation}
in the sense that the optimal cost corresponding to \eqref{stateeq1}, \eqref{obser1}, \eqref{JT1} satisfies \eqref{bellman1}.

Let 
\begin{equation}\label{cep1}
V_{ce}(x,p)=\frac12\sum_{j=1}^Np_j\inner{K_jx_j}{x_j}+ H(p),
\end{equation}
where $H$ is given in \eqref{calculusfact}. Then the validity of \eqref{explicit} for some partial isometry $F:\R^o\to\R^i$ implies $V_{ce}$ is a solution of \eqref{bellman1}. Moreover, under a mild restriction, the converse holds. These last facts are a purely algebraic computation using \eqref{algricatti}, independent of the probabilistic set-up \eqref{stateeq1}, \eqref{obser1}, \eqref{JT1}. The rest of this section consists of the details surrounding these assertions.

Let $(x,p)$ be an initial state ensemble $x=(x_1,\dots,x_N)$ and probability $p=(p_1,\dots,p_N)$.  A control $u$ is  {\em admissible at $(x,p)$} (\S\ref{stabiliz}) if $u(t)$ depends only on past observations $y(s)$, $0\le s\le t$, for each $t\ge0$, and there exists statistics $P$ on the sample space of $(\theta,y)$ under which \eqref{stateeq1}, \eqref{obser1} hold, in the sense that under  $P$,
\begin{equation}\label{L2control}
\int_0^t|u(s)|^2\,ds<\infty,\qquad t\ge0,
\end{equation}
almost surely, $\theta$ is distributed according to $p$, and 
$$w(t)=y(t)-\int_0^tz(s)\,ds,\qquad t\ge0,$$ 
is a Wiener process independent of $\theta$.  

There is no reason for a sufficiently general control $u$ to be admissible: the statistics $P$ may not exist. In \S\ref{pols}, we show for each control $u$ and initial condition $(x,p)$, there is at most one $P$. Subsequently, in \S\ref{stabiliz}, \S\ref{optimal},  we show that for each $(x,p)$ and for the specific feedback controls we seek, $P$ exists. We also show the class of controls admissible at $(x,p)$, $p$ in the interior of $S^N$,  
does not depend on $(x,p)$ (\S\ref{stabiliz}).

Let $P$ be the statistics corresponding to $(x,p)$ and admissible control $u$, and let $\delta_j$ be the distribution on $\{1,\dots,N\}$ equal to one at $j$. Then (\S\ref{pols}) the conditional probability $P_j(A)=P(A \mid \theta=j)$ is the statistics corresponding to $(x,\delta_j)$ and $u$.  If  $J(x,\delta_j,u)$ is the corresponding cost, then $J(x,\delta_j,u)$ depends only on $j$, $x_j$, and $u$, and
\begin{equation}\label{affine}
J(x,p,u)=\sum_{j=1}^N p_j J(x,\delta_j,u),\qquad p\in S^N.
\end{equation}
Let $\bar u_j(t)=E^{P_j}(u(t))$ and let $J_j$ be the cost corresponding to the $j$-th system.
 By the Cauchy-Schwarz inequality, $J(x,\delta_j,u)\ge J_j(x_j,\bar u_j)$, hence
\begin{equation}\label{lambda=0}
J(x,p,u)\ge U_{ce}(x,p).
\end{equation}
This is consistent with the fact that $U_{ce}$ is a subsolution of the Bellman equation \eqref{bellman1}. 

 An admissible control $u$ is {\em stabilizing at $(x,p)$} if  $x(t)\to0$ as $t\to\infty$, almost surely $P$.  We use $J$ to find stabilizing admissible controls. For this approach to succeed, it is necessary that $J$ be Lyapunov in the sense that the finiteness of $J(x,p,u)$ implies $u$ is stabilizing at $(x,p)$. In fact (\S\ref{stabiliz}) this is so, when $(A_j,B_j,C_j)$, $j=1,\dots,N$, are minimal.
 
 An admissible  control $u$ is  {\em optimal at $(x,p)$} if $J(x,p,u)=U(x,p)$, where
\begin{equation}\label{valueU}
U(x,p)=\inf_u J(x,p,u).
\end{equation}
If $u$ is optimal at $(x,p)$ and $U(x,p)$ is finite, it follows that $u$ is stabilizing at $(x,p)$.
If $U$ is finite at some $(x,p_0)$ with $p_0$ in the interior of $S^N$, by \eqref{affine}, $U$ is finite on $\{x\}\times S^N$, and a standard argument implies\footnote{This inequality is 
not used in this paper.}\addtocounter{footnote}{-1}\addtocounter{Hfootnote}{-1}
$$\sum_{j=1}^Np_j\left|\frac{\partial U}{\partial p_j}(x,p)\right|\le U(x,p)$$
almost everywhere on $\{x\}\times S^N$.
  
Given an initial state $x_j$ and an admissible control $u$, let $z_j(t)=G_jx_j(t)$ be the signal when $\theta\equiv j$. Let
\begin{equation}\label{zhat}
\hat z(t)= \sum_{j=1}^N p_j(t)z_j(t),
\end{equation}
where $p_j(t)$ is the conditional probability \eqref{condprob},  $j=1,\dots,N$.
A first guess for a stabilizing admissible control is the certainty equivalent feedback control of the form
\begin{equation}\label{cep}
u(t) = -F\hat z(t),\qquad t\ge0.
\end{equation}
If,  for each initial condition $(x,p)$, there is a unique stabilizing admissible  control $u$ at $(x,p)$ satisfying \eqref{cep},  the system is {\em adaptively stabilizable with feedback $F$}. If $\bar A_j=A_j-B_jFG_j$, $j=1,\dots,N$, are stable,  the system is {\em uniformly stabilizable with feedback $F$}. By specializing $p=\delta_j$, $j=1,\dots,N$, we see uniform stabilizability with feedback $F$ is a necessary condition for  adaptive stabilizability with feedback $F$. We show (\S\ref{stabiliz}) that uniform stabilizability with feedback $F$  implies adaptive stabilizability with feedback $F$, and moreover the admissible $u$ satisfying \eqref{cep} has finite cost $U^F(x,p)<\infty$. 

Note when \eqref{explicit} holds, the feedbacks \eqref{compobs}, \eqref{cep} are consistent: the latter is the conditional expectation of the former.

When there is no signal, $G_\theta\equiv0$, $\bar u_j$ does not depend on $j$, so the partially observed problem reduces to seeking a deterministic control $u$ achieving the minimum in
\begin{equation}\label{determ}
U(x,p)=\inf_u\left(\sum_{j=1}^N p_jJ_j(x_j,u)\right).
\end{equation}
Let $\nabla_j$ denote the gradient with respect to $x_j$, and let $U_j^F(x)$ denote the cost of the $j$-th system corresponding to the  feedback $u=-Fx_j$, $j=1,\dots,N$. 
When there is  no signal, if $\bar A_j=A_j-B_jF$, $j=1,\dots,N$, are stable, differentiating $J(x,p,u)$ and  using \eqref{determ}  and the Cauchy-Schwarz inequality as in \eqref{CSF} implies\footnotemark
$$
\sum_{j=1}^N\left|\nabla_j \sqrt{U(x,p)}\right|\le 
\max_{\substack{|\xi|\le1 \\ 1\le j\le N}}\sqrt{U_j^F(\xi)}
$$
almost everywhere in $x$, for each $p$. 
\setlength{\footnotesep}{4mm}
A similar estimate\footnote{That such an inequality is plausible is suggested by $f=\sum_{j=1}^Np_j|x_j|^2 \Rightarrow \sum_{j=1}^N\left|\nabla_j \sqrt{f}\right|\le1$.} 
in general when there is a signal would be very welcome.

The conditional probabilities $p(t)=(p_1(t),\dots,p_N(t))$ always converge to a limit   $p(\infty)$ as $t\to\infty$. 
One measure of the information contained in the observations is   $-E^P(H(p(\infty)))$, where 
$H$ is the entropy \eqref{calculusfact}.
Then increasing information corresponds to decreasing entropy, which suggests minimizing the modified cost functional $J(x,p,u)+E^P(H(p(\infty)))$  instead.  This strategy of minimizing a sum of an ``operating risk'' and a ``probing risk'' was first discussed for non-Markov models in \cite{F}. As $0\le E^P(H(p(\infty)))\le H(p)$ by concavity of $H$, this modification has no effect on stability: The cost functional is Lyapunov before the modification if and only if it is Lyapunov after the modification. 

Let 
\begin{equation}\label{value}
V(x,p)=\inf_u \left(\vphantom{\int}J(x,p,u)+E^P(H(p(\infty)))\right)
\end{equation}
be the modified optimal cost or value function,
where the infimum is over admissible controls $u$. Then 
$$U(x,p)\le V(x,p)\le U(x,p)+ H(p).$$
Given $(x,p)$, an admissible control $u$ is {\em optimal at $(x,p)$} if
$$V(x,p)=J(x,p,u)+ E^P(H(p(\infty))).$$
 If $u$ is optimal at $(x,p)$ and $V(x,p)$ is finite, then $u$ is stabilizing at $(x,p)$.

In  \S\ref{optimal}, we show a nonnegative $C^2$ solution of the Bellman equation
\eqref{bellman1}
satisfying $f(0,p)=0$ necessarily equals $U$, and a nonnegative $C^2$ solution of \eqref{bellman1} satisfying $f(0,p)=H(p)$ necessarily equals $V$. In general, we conjecture that, when the system is adaptively stabilizable, $f=U$ and $f=V$ are the unique nonnegative viscosity  \cite{CIL} solutions of \eqref{bellman1} with the prescribed values at $x=0$.

A function $f$ is a supersolution of \eqref{bellman1} if the right side is no greater than the left side, for all $(x,p)$. If the reverse inequality holds, $f$ is a subsolution. Supersolutions are of interest because they lead to finite cost and hence stabilizing controls (\S\ref{optimal}). $U_{ce}$ is always a subsolution of the Bellman equation and 
$U_{ce}$ is never a supersolution of the Bellman equation, unless $N=1$ (\S\ref{optimal}).

If the system is adaptively stabilizable with feedback $F$, let $U^F(x,p)=J(x,p,u)$ and $V^F(x,p)=J(x,p,u)+E^P(H(p(\infty)))$ where $u$ satisfies \eqref{cep}. Let $\mu>0$ be such that $\Re \bar A_j=\Re(A_j-B_jFG_j)\le-\mu$ and $1/\mu$ bounds $|B_j|$, $|C_j|$, $|G_j|$, and $|F|$, $j=1,\dots,N$. Then $U^F$ and $V^F$ are viscosity supersolutions satisfying 
\begin{equation}\label{feedbackV}
U^F(x,p)\le V^F(x,p)\le c(\mu) \left(\frac12\sum_{j=1}^Np_j|x_j|^2+H(p)\right),
\end{equation}
for some constant $c(\mu)$ depending only on $\mu$.

Let $V_{ce}$ be given by \eqref{cep1}.
We say {\em certainty equivalence holds} if  $V_{ce}$ is a solution of the Bellman equation \eqref{bellman1}.
We show (\S\ref{optimal}) certainty equivalence holds if there is a partial isometry $F:\R^o\to\R^i$ such that \eqref{explicit} holds.
When this happens, $V=V_{ce}$, the system is adaptively stabilizable with feedback $F$, and for each $(x,p)$, the unique admissible control satisfying \eqref{cep} and stabilizing at $(x,p)$ is the unique admissible control optimal at $(x,p)$. 

We say the signal is {\em faithful} if for some $1\le j\le N$, the linear map $G_j$ is surjective. We show, when the signal is faithful and $N\ge2$, certainty equivalence holds only if  there is a partial isometry $F:\R^o\to\R^i$ such that \eqref{explicit} holds.

We show $V_{ce}$ is a supersolution of \eqref{bellman1} if there is a feedback $F:\R^o\to\R^i$ of norm at most one such that 
 \eqref{explicit} holds. When this happens, the system is adaptively stabilizable with feedback $F$, and for each $(x,p)$ the unique admissible control $u(t)$ given by \eqref{cep} satisfies
\begin{equation}\label{VV}
V(x,p)\le J(x,p,u)+E^P(H(p(t)))\le V_{ce}(x,p).
\end{equation}
We show, when $N\ge2$ and the signal is faithful, $V_{ce}$ is a supersolution only if there is a feedback $F:\R^o\to\R^i$ of norm at most one such that \eqref{explicit} holds (\S\ref{optimal}).

By rescaling $\lambda^2V(x/\lambda,p)$, or by replacing $H$ by $\lambda^2H$,  the same results are valid when the norm of the feedback is at most $\lambda$, hence for any feedback $F$ satisfying \eqref{explicit} and any $(x,p)$, \eqref{cep} yields  an admissible control stabilizing at $(x,p)$.

In principle, the results of this paper should remain valid if the prior distribution $p=(p_1,\dots,p_N)$ of $\theta$ is replaced by a continuous  prior distribution $p$ on the space of all linear systems $(A,B,C,G)$. 

More generally, the definitions of {\em signal process} and {\em statistics} and the entropy identity \eqref{meansqvar} in \S\ref{pols}, \S\ref{filtering}  should extend to the general signal-plus-noise setting where $\theta$ is valued in a Polish space.

\section{The LQ Regulator}
\label{lqr}

We review the LQ regulator \cite{RWB, OH} emphasizing aspects that we use in the adaptive setting. 

The problem is to determine the control $u(t)\in\R^i$ minimizing the cost 
 \begin{equation}\label{JT}
J_T(x,u)=\frac12\int_0^T \left(|Cx(t)|^2+|u(t)|^2\right)\,dt,
\end{equation}
over all (possibly discontinuous)  controls $u$, where the state $x(t)$ is given by
\begin{equation}\label{stateeq}
\dot x(t) =  A x(t)+B u(t),\qquad x(0)=x.
\end{equation}

This is the finite or infinite horizon problem according to whether $T<\infty$ or $T=\infty$. In either case, we take as admissible controls any function $u$ on $[0,\infty)$ valued in $\R^i$ satisfying \eqref{L2control}. With standard convergence on $(T,x)$ and  weak convergence on $u$, $J$ is continuous in $(T,x)$ and lower-semicontinuous in $(T,x,u)$.  It follows that minimizing controls for \eqref{JT} exist, for any starting state $x$ and horizon $T\le\infty$. Let 
\begin{equation}\label{valuefn}
U_T(x)=\min_u J_T(x,u)
\end{equation}
be the optimal cost or value function. Then $U$ is lower-semicontinuous and $U(0,x)=0$. 
Since $J_T(\lambda x,\lambda u)=\lambda^2 J_T(x,u)$, we have
$$U_T(\lambda x)=\lambda^2U_T(x),\qquad \lambda>0.$$
Moreover $U_T(x)$ is increasing as a function of $T$ and nonnegative, and  satisfies the dynamic programming property \cite{FS}
\begin{equation}\label{dp}
U_T(x) = \min_u\left(\frac12\int_0^t\left(|Cx|^2+|u|^2\right)\,ds
+ U_{T-t}(x(t))\right),
\end{equation}
for $0\le t\le T$.

We assume throughout $(A,B)$ is  {\em controllable}, for each $x$, there is a control $u$  driving \eqref{stateeq} to the origin in finite time, and $(A,C)$ is  {\em observable}, the zero-response output map $x\mapsto Ce^{tA}x$, $t\ge0$, determines the initial state $x$.  In short, we assume the system  $(A,B,C)$ is  {\em minimal.} As is well-known \cite{RWB, OH}, this happens iff $(B,AB,A^2B,\dots)$ and $(C/CA/CA^2/\dots)$ have full rank.

Let $U(x)\equiv U_\infty(x)$ and $J(x,u)\equiv J_\infty(x,u)$ be the infinite horizon value function and cost function. Controllability implies the value function $U(x)$ is finite for all $x$, and observability implies $U_T(x)>0$ for all $T>0$ and $x\not=0$.
It follows that $U(x)$ is proper, $|x|\to\infty$ implies $U(x)\to+\infty$. 
 Then $U(x(t))\to0$ as $t\to\infty$ implies $x(t)\to0$ as $t\to\infty$.

By \eqref{dp} with $T=\infty$, for any admissible control $u$,
$$J(x,u)\ge \frac12\int_0^T\left(|Cx|^2+|u|^2\right)\,dt + U(x(T))$$
for $T\ge0$. When $J(x,u)<\infty$, this implies $U(x(t))\to0$ hence $x(t)\to0$, as $t\to\infty$.

Let $\nabla_\xi J$ denote the gradient of $J$ with respect to $x$ in the direction $\xi$, and let $U^0_T(x)$ denote the cost corresponding to $u=0$.  Since $J$ is quadratic in the state trajectory, by the Cauchy-Schwarz inequality,
\begin{equation}\label{CS}
\left|\nabla_\xi J_T(x,u)\right|^2\le  4\,U^0_T(\xi)\,J_T(x,u)
\end{equation}
which implies
$$
\left|\nabla_\xi \sqrt{J_T(x,u)}\right|\le \sqrt{U^0_T(\xi)}.
$$
It follows that 
$$
\left|\sqrt{U_T(x_1)}-\sqrt{U_T(x_2)}\right|\le \left(\max_{|x|\le1}\sqrt{U^0_T(x)}\right)\cdot|x_1-x_2|,
$$
which implies $U$ is continuous as a function of $(T,x)$.  From this and \eqref{dp}, $U=U_T(x)$ is the unique nonnegative solution of  the Bellman equation 
\begin{equation}\label{bellman}
\frac{\partial U}{\partial t}=\frac12|Cx|^2+\inner{\nabla U}{Ax}
-\frac12|\nabla U B|^2,
\end{equation}
with initial condition $U(0,x)=0$, in the viscosity sense \cite{CIL}.

If $K(t)$, $0\le t\le T$, satisfies the Ricatti differential equation
\begin{equation}\label{ricatti}
\frac{\partial K}{\partial t}=C^*C+A^*K+KA-KBB^*K,
\end{equation}
with initial condition $K(0)=0$, then  $U=\inner{K(t)x}{x}/2$, $0\le t\le T$, satisfies \eqref{bellman}, hence  
\begin{equation}\label{ricV}
U_T(x)=\frac12\inner{K(T)x}{x}.
\end{equation}
It follows that $K(t)$ is an increasing positive-definite matrix-valued function of $t$. This implies the unique solution $K(t)$ of \eqref{ricatti} exists and \eqref{ricV} holds, for all $t\ge0$. Combining \eqref{dp}, \eqref{bellman}, and \eqref{ricV}, we obtain for each horizon $T>0$ the unique optimal control in linear feedback form
$$u(t)=-B^*\nabla U_{T-t}(x(t))=-B^*K(T-t)x(t),$$
for $0\le t\le T$.

Since $U(x)$ is finite everywhere,  $K(T)$ increases as $T\to\infty$ to some $K$  satisfying the Ricatti algebraic equation \eqref{algricatti}, hence $\inner{Kx}{x}/2$ satisfies
\begin{equation}\label{steadybellman}
0=\frac12|Cx|^2+\inner{\nabla U}{Ax}
-\frac12|\nabla U B|^2.
\end{equation}
This implies (Theorem \ref{super} with $N=1$) $U=\inner{Kx}{x}/2$ and the unique optimal control is
$$u(t)=-B^*\nabla U(x(t))=-B^*Kx(t),$$
which implies \eqref{algricatti} has a unique positive definite solution $K=K(A,B,C)$ corresponding to every minimal triple $(A,B,C)$. 

It follows from \eqref{dp} that
\begin{equation}\label{estimateK}
\frac12\inner{K(T-t)x(t)}{x(t)}\le \frac12\int_t^T \left(|Cx|^2+|u|^2\right)\,ds,
\end{equation}
for all admissible controls $u$. Let $K(1)>0$ denote the solution of \eqref{ricatti} at $t=1$. Then inserting $T=t+1$ into \eqref{estimateK}  and integrating over $t\ge0$ yields
\begin{equation}\label{estimateK1}
\frac12\int_0^\infty \inner{K(1)x(t)}{x(t)}\,dt \le J(x,u)
\end{equation}
for all admissible controls $u$.

Letting $\bar A=A-BB^*K$ denote the feedback dynamics. Since $J(x,u)=V(x)<\infty$,
$x(t)\to0$ as $t\to\infty$, hence $\bar A$ is stable. 

Assume the system is stabilizable, i.e. there is a feedback $F$ such that $\bar A=A-BF$ is stable. Then the map $u(t)\mapsto u(t)-Fx(t)$, where $x(t)$ satisfies \eqref{stateeq}, is a bijection of the class of all admissible controls. Thus
$$U(x)=\min_u\left(\frac12\int_0^\infty \left(|Cx(t)|^2+|u(t)-Fx(t)|^2\right)\,dt\right),$$
where $x(t)$ satisfies
$$\dot x(t)=Ax(t)+B(u(t)-Fx(t))=\bar Ax(t)+Bu(t).$$
If $U^F(x)$ denotes the cost $J(x,u)$ corresponding to the feedback $u=-Fx$ in  \eqref{stateeq}, then repeating the logic leading to \eqref{CS} yields
\begin{equation}\label{CSF}
\left|\nabla \sqrt{U(x)}\right|\le \max_{|\xi|\le1}\sqrt{U^F(\xi)},
\end{equation}
valid for any control $u$.

\section{Partially Observed Systems}
\label{pols}

We derive the existence and uniqueness results for the underlying statistics $P$ for the signal-plus-noise model
\begin{equation}\label{s+n}
y(t)=\int_0^tz(s)\,ds+w(t),\qquad t\ge0.
\end{equation}
Here $w(t)$ is a Wiener process independent of $\theta\in\{1,\dots,N\}$,  $z_j(t)$, $j=1,\dots,N$, are processes depending only on the past of the observations $y(t)$, and $z(t)=z_\theta(t)$. 

Let $C([0,\infty),\R^o)$ be the space of all continuous sample paths $\alpha$ on $[0,\infty)$ valued in $\R^o$.  Then  $C([0,\infty),\R^o)$ is the sample space for the observed process $y$, and the set $\Omega=\{1,\dots,N\}\times C([0,\infty),\R^o)$ of sample pairs $\omega=(j,\alpha)$ is the sample space for the model. Under uniform convergence on compact intervals, $\Omega$ is a complete metric space.

Define $y(t):\Omega\to\R^o$ and $\theta:\Omega\to\{1,\dots,N\}$ by $y(t,\omega)=\alpha(t)$, $t\ge0$, and $\theta(\omega)=j$. Let $\Y_t=\sigma(y(s),0\le s\le t)$, $t\ge0$, denote the sigma-algebra on $\Omega$ generated by the observations up to time $t$, and let $\Y$ denote the sigma-algebra generated by $\Y_t$, $t\ge0$.
Let $\B_t=\sigma(\theta,\Y_t)$, $t\ge0$, and $\B=\sigma(\theta,\Y)$. Then $\Y_t\subset\B_t$, $t\ge0$, and $\B$ is the Borel sigma-algebra of the metric space  $\Omega$. A  {\em process} is a Borel map on $[0,\infty)\times\Omega$.

In later sections, when we introduce controls, the processes $z(t)$, $w(t)$, and the statistics $P$ (below) will depend on the control $u$ (and the initial condition $(x,p)$). However, the process $y$ and the random variable $\theta$ will always refer to the fixed quantities defined above.

Let $\F_t$ be $\B_t$ or $\Y_t$.   An {\em $\F_t$ process} is an $\F_t$ progressively measurable \cite{KaS} process. Let $P$ be a probability measure on $\B$.  A process $z(t)$ is {\em continuous under $P$} if the sample paths $z(\cdot,\omega)$ are  continuous for $P$ almost all $\omega$ and right-continuous for all $\omega$.

If $z(t)$ is a $\B_t$ process, there are $\Y_t$ processes $z_j(t)$, $j=1,\dots,N$, such that $z(t)=z_\theta(t)$ on $\Omega$. 

A process $z(t)$ is a {\em signal process under $P$}  if
\begin{equation}\label{sqint5}
\int_0^{t} \left(\max_{1\le j\le N}|z_j(s)|^2\right)\,ds<\infty,\qquad t\ge0,
\end{equation}
almost surely $P$.   A continuous process is a signal process. If $z(t)$ is a signal $\B_t$ process under $P$, there is a continuous $\B_t$ process equal almost surely $P$ to $\int_0^tz(s)\,ds$.

Let $i=\sqrt{-1}$ and let
$$\ell(t;ie,w)=\exp\left(i\inner{e}{w(t)}+\frac12|e|^2t\right),$$
$t\ge0$. A $(P,\F_t)$ {\em Wiener process} is a continuous $\F_t$ process $w(t)$ such that 
 $w(0)=0$ and  $\ell(t;ie,w)$ is  a $(P,\F_t)$ martingale, for all $e\in \R^o$ \cite{KaS}.

Let $p=(p_1,\dots,p_N)\in S^N$ and let $z(t)$ be a $\B_t$ process.  A probability measure $P$ on $\B$ is a {\em statistics corresponding to  $p$ and $z(t)$}  if under $P$,
\begin{enumerate}
\item[S1.] $\theta$ is distributed according to $p$,
\item[S2.] $z(t)$ is a signal process, and
\item[S3.] $y(t)-\int_0^tz(s)\,ds$ is a $\B_t$ Wiener process $w(t)$.
\end{enumerate}
Since $\theta$ is $\B_0$ measurable, (S3) implies $w$ and $\theta$ are independent under $P$. 

Let $W$ be Wiener measure on $\Y$. When there is no signal, $G_ \theta\equiv0$,  we have $P=p\times W$.
Let $\delta_j$ be the distribution on $\{1,\dots,N\}$ equal to one at $j$, $j=1,\dots,N$.

\begin{theorem}
\label{local}
Let $\tau$ be a $\B_t$ stopping time. Then for any statistics $P$ corresponding to $p$ and $z(t)$, there is a statistics $P_\tau$ corresponding to $p$ and $z(t)1_{t<\tau}$ satisfying
$P=P_\tau$ on $\B_{\tau}$.
\end{theorem}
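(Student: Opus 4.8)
The plan is to obtain $P_\tau$ from $P$ by a Girsanov change of measure against the reference statistics $\tilde P=p\times W$, under which $y$ is a $\B_t$ Wiener process and $\theta$ is independent of $y$ with law $p$. Because $z$ is a signal process under $P$, for each fixed $\theta=j$ the observation $y$ is a finite-energy drift plus a Wiener process, so its law on $\Y_t$ is absolutely continuous with respect to Wiener measure; averaging over $\theta\sim p$ gives $P\ll\tilde P$ on each $\B_t$ with density the Girsanov exponential
$$
L(t)=\exp\left(\int_0^t\inner{z(s)}{dy(s)}-\frac12\int_0^t|z(s)|^2\,ds\right).
$$
Since $P$ is a probability measure, $E^{\tilde P}(L(t))=1$, so $L$ is a genuine (not merely local) $(\tilde P,\B_t)$ martingale. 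I would then simply \emph{define} $P_\tau$ by the stopped density, $dP_\tau/d\tilde P|_{\B_t}=L(t\wedge\tau)$, $t\ge0$. Note $L(t\wedge\tau)$ is exactly the exponential built from the truncated signal $z(s)1_{s<\tau}$, which is the signal we want $P_\tau$ to carry.

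The first point is that this prescription is consistent and defines a probability measure, and here the stopping does the essential work: the stopped process of a true martingale is again a true martingale (optional stopping), so $E^{\tilde P}(L(t\wedge\tau)\mid\B_s)=L(s\wedge\tau)$ for $s\le t$ and $E^{\tilde P}(L(t\wedge\tau))=1$; the family $\{L(t\wedge\tau)\,d\tilde P|_{\B_t}\}_t$ is therefore consistent, and since $\Omega$ is Polish and $\B=\bigvee_t\B_t$ it extends to a probability $P_\tau$ on $\B$ (via regular conditional probabilities, equivalently by pasting a fresh Wiener continuation onto $P$ after time $\tau$). That $P_\tau=P$ on $\B_\tau$ follows from the same optional stopping identity: for $A\in\B_\tau$ with $A\subset\{\tau\le t\}$ one has $A\in\B_t$ and $E^{\tilde P}(L(t)\mid\B_\tau)=L(\tau)$ on $\{\tau\le t\}$, whence $P_\tau(A)=E^{\tilde P}(1_AL(t\wedge\tau))=E^{\tilde P}(1_AL(\tau))=P(A)$, and a countable exhaustion in $t$ finishes it. Conditions (S1) and (S2) for $P_\tau$ are then immediate, since $\{\theta=j\}\in\B_0\subset\B_\tau$ and the truncated energy $\int_0^{t\wedge\tau}\max_j|z_j|^2\,ds$ is $\B_\tau$ measurable, so both are inherited from $P$.

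The substantive verification is (S3): that $\tilde w(t)=y(t)-\int_0^tz(s)1_{s<\tau}\,ds=y(t)-\int_0^{t\wedge\tau}z(s)\,ds$ is a $(P_\tau,\B_t)$ Wiener process. Using the paper's criterion, it suffices to show $\ell(t;ie,\tilde w)$ is a $(P_\tau,\B_t)$ martingale for every $e\in\R^o$. I would verify this by computing, under $\tilde P$, that the product $\ell(t;ie,\tilde w)\,L(t\wedge\tau)$ is precisely the stochastic exponential of $\int_0^\cdot\inner{ie+z(s)1_{s<\tau}}{dy(s)}$, hence a $\tilde P$ local martingale, and a genuine martingale because its modulus equals $e^{|e|^2t/2}L(t\wedge\tau)$, dominated in $L^1$ by the true martingale $L(\cdot\wedge\tau)$. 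Bayes' rule for the change of measure then gives
$$
E^{P_\tau}\!\left(\ell(t;ie,\tilde w)\mid\B_s\right)=\frac{E^{\tilde P}\!\left(\ell(t;ie,\tilde w)L(t\wedge\tau)\mid\B_s\right)}{L(s\wedge\tau)}=\ell(s;ie,\tilde w),
$$
which is exactly (S3); independence of $\tilde w$ from the $\B_0$ measurable $\theta$ is then automatic, completing the list.

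I expect the main obstacle to be precisely the point at which the stopping is indispensable: for an unstopped drift change the relevant exponential is in general only a supermartingale, so the naive ``turn the signal off after $\tau$'' measure could have total mass below one. Localizing at the $\B_t$ stopping time $\tau$ is what guarantees a true martingale (via optional stopping of the already-established true martingale $L$), and hence a bona fide probability $P_\tau$; the accompanying technical care is the passage from the consistent family on the filtration $(\B_t)$ to a countably additive measure on $\B$, for which the Polish structure of $\Omega$ is used. A smaller preliminary point needing justification is the local absolute continuity $P\ll\tilde P$ on $\B_t$ with density $L(t)$, which rests only on the a.s. finite-energy property \eqref{sqint5} of the signal, not on any integrability of $z$.
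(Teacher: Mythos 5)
Your construction hinges entirely on the claim that $P\ll\tilde P=p\times W$ on each $\B_t$ with density the Girsanov exponential $L(t)$, which you defer as ``a smaller preliminary point.'' That is the genuine gap: under the paper's hypotheses this claim is neither small nor a safely citable standard fact. The signal is an arbitrary $\B_t$ process whose energy $\int_0^t\max_j|z_j|^2\,ds$ is finite only almost surely under $P$; no Novikov-type bound holds, $P$ need not be equivalent to $\tilde P$, and the set where $\int_0^t|z|^2\,ds=\infty$ can have positive $\tilde P$-measure. On that set $L(t)$, the Ito integral $\int_0^t\inner{z(s)}{dy(s)}$, and even your process $\tilde w$ are undefined under $\tilde P$, so each subsequent step (identifying $\ell(t;ie,\tilde w)L(t\wedge\tau)$ with a stochastic exponential, its local-martingale property, the Bayes computation) silently assumes objects that only exist after the density is redefined to vanish off the finite-energy set, with extra localization to justify the calculus. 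More seriously, the absolute-continuity statement you invoke (a.s. finite-energy adapted drift plus Wiener noise implies the law of the observation is locally absolutely continuous with respect to Wiener measure) is, in this generality --- an arbitrary $\Y_t$-adapted signal with no SDE structure or moment bounds --- a theorem whose standard proof (e.g. Liptser--Shiryaev) runs through exactly the construction you are being asked to prove: localize the energy by $\Y_t$ stopping times, handle the stopped (bounded-energy) signal by Girsanov, and compare with $P$ by pasting pure Wiener noise in after the stopping time. In this paper the implication goes the other way: \eqref{girsanov} is asserted only for local signals, and the general local absolute continuity is a consequence of Theorem \ref{local} together with Lemma \ref{lemmalocal}. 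So as written your argument is circular at its core, or at best replaces the theorem by an external result of the same depth proved by the same device.

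For contrast, the paper's proof uses no densities at all: it defines $P_\tau(A)=E^P\left(Q_{\tau,\theta,y}(A)\right)$, where $Q_{t,j,\alpha}=\delta_j\times W_{t,\alpha}$ and $W_{t,\alpha}$ is Wiener measure pinned at $\alpha$ on $[0,t]$ --- run $P$ up to $\tau$, then continue with fresh noise --- so $P=P_\tau$ on $\B_\tau$ and conditions (S1), (S2) are immediate, and (S3) follows by splicing the pre-$\tau$ martingale $\ell(t\wedge\tau;ie,w_\tau)$ under $P$ with the post-$\tau$ martingale under the pinned measures via the martingale splicing lemma of Stroock--Varadhan. A smaller, fixable defect in your writeup: the exhaustion argument for $P=P_\tau$ on $\B_\tau$ covers only sets $A\subset\{\tau\le t\}$; every Borel subset of $\{\tau=\infty\}$ also belongs to $\B_\tau$ (its intersection with each $\{\tau\le t\}$ is empty), and agreement of $P$ and $P_\tau$ on those sets requires a separate $\pi$-system and monotone limit argument, whereas in the paper's construction it is automatic because $Q_{\infty,j,\alpha}=\delta_{(j,\alpha)}$.
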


\begin{proof}
This is a rephrasing of standard results \cite[6.1.2]{SV}. For $\alpha\in C([0,\infty),\R^o)$ and $t\ge0$, let $W_{t,\alpha}$ denote Wiener measure pinned at $\alpha$ on $[0,t]$, and let $Q_{t,j,\alpha}=\delta_j\times W_{t,\alpha}$. Define
$$P_\tau(A)=E^P\left(  Q_{\tau,\theta,y}(A) \right),\qquad A\in\B.$$
Then 
$$P_\tau(A\inter B)= E^P\left(Q_{\tau,\theta,y}(A);B \right)$$
for $A\in\B$ and $B\in\B_{\tau}$.  In particular, $P=P_\tau$ on $\B_\tau$  and $P_\tau(\theta=j)=p_j$, $j=1,\dots,N$.
Since $z(t)1_{t<\tau}$ is $\B_{\tau}$ measurable, $z(t)1_{t<\tau}$  is a signal process under $P_\tau$. Let $w_\tau(t)=y(t)-\int_0^{t\wedge\tau} z(s)\,ds$. It remains to be shown that $w_\tau(t)$ is a $(P_\tau,\B_t)$ Wiener process.

Since $w(t)=y(t)-\int_0^tz(s)\,ds$ is a $(P,\B_t)$ Wiener process, $\ell(t;ie,w)$ is a $(P,\B_t)$ martingale, hence $\ell(t\wedge\tau;ie,w_\tau)=\ell(t\wedge\tau;ie,w)$ is a $(P,\B_t)$ martingale.  On the other hand, 
$$\ell(t;ie,w_\tau)\ell(t\wedge\tau;ie,w_\tau)^{-1}=\ell(t;ie,y)\ell(t\wedge\tau;ie,y)^{-1}$$
is a $(Q_{\tau,\theta,y},\B_t)$ martingale. By the martingale splicing lemma \cite[1.2.10]{SV}, $\ell(t;ie,w_\tau)$ is a $(P_\tau,\B_t)$ martingale, thus $w_\tau(t)$ is a $(P_\tau,\B_t)$ Wiener process.
\end{proof}

Let $P$ be a probability measure on $\B$.  If $w(t)$ is a $(P,\B_t)$ Wiener process and $z(t)$ is a signal $\B_t$ process under $P$, let 
\begin{equation}\label{likelihood}
\ell(t;z,w)=\exp\left(\int_0^t\inner{z(s)}{dw(s)}-\frac12\int_0^t|z(s)|^2\,ds\right),
\end{equation}
$t\ge0$, where the first integral is an Ito integral (under $P$) \cite{KaS}. Then $\ell(t;z,w)$ is a continuous $\B_t$ process. 

A process $z(t)$ is {\em local under $P$} if for some $n\ge1$,
\begin{equation}\label{L2}
\int_0^\infty \left(\max_{1\le j\le N}|z_j(t)|^2\right)\,dt\le n,
\end{equation}
almost surely $P$. If $z(t)$ is a local $\B_t$ process, then $I(t;z,w)$,  $\ell(t;z,w)$ are $(P,\B_t)$ martingales, $I(\infty;z,w)$,  $\ell(\infty;z,w)$ exist, and $I(\infty;z,w)$,  $\ell(\infty;z,w)$ have means $0$, $1$ respectively. 

Let $z(t)$, $z'(t)$ be $\B_t$ processes. If $P$ is a statistics corresponding to $p$ and $z(t)$, and $z'(t)-z(t)$  is local under $P$, then, by the Girsanov theorem \cite{KaS}, $dP'/dP=\ell(\infty;z'-z,w)$ with $w(t)=y(t)-\int_0^tz(s)\,ds$ defines statistics $P'$ corresponding to $p$ and $z'(t)$, and $z'(t)-z(t)$  is local under $P'$. 

It follows that $P$ is a statistics corresponding to $p$ and $z(t)$ and $z(t)$ is local under $P$ iff $z(t)$ is local under $Q=p\times W$ and 
\begin{equation}\label{girsanov}
P(A)=E^{Q}(\ell(\infty;z,y);A),\qquad A\in\B.
\end{equation}

Let $r(t)$ be the quantity in \eqref{sqint5}. Every signal process can be localized:

\begin{lemma}
\label{lemmalocal}
Let $z(t)$ be a $\B_t$ process. Then there are $\Y_t$ stopping times $\tau_n$, $n\ge1$, such that for any $P$ under which $z(t)$ 
is a signal process, $z(t)1_{t<\tau_n}$ is local under $P$, $r(\tau_n)=n$ on $\tau_n<\infty$ almost surely $P$,  $n\ge1$,  and
 $P(\tau_n\le T)\to0$ as $n\to\infty$ for all $T>0$.
\end{lemma}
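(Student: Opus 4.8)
The plan is to take $\tau_n$ to be the first passage time of the increasing process $r(t)$ across the level $n$, namely
\[
\tau_n=\inf\{t\ge0:r(t)\ge n\},\qquad n\ge1,
\]
with the convention $\inf\emptyset=\infty$. The crucial observation is that $r(t)=\int_0^t(\max_{1\le j\le N}|z_j(s)|^2)\,ds$ is built from the $\Y_t$ processes $z_j$ alone, and so is defined pathwise on $\Omega$, with values in $[0,\infty]$, without any reference to a measure $P$. Since $\max_j|z_j(t)|^2$ is $\Y_t$ progressively measurable, $r(t)$ is a $\Y_t$ adapted, nondecreasing process that is continuous wherever it is finite, and the first passage times of such a process are $\Y_t$ stopping times. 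This one definition must then serve every statistics $P$ under which $z$ is a signal process, and it is precisely the $P$-independence of $r$ and $\tau_n$ that makes the required uniformity over $P$ automatic.

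Now fix such a $P$. Because $z$ is a signal process, \eqref{sqint5} gives $r(t)<\infty$ almost surely $P$ for each $t$, so $r$ is continuous and nondecreasing almost surely. Continuity forces $r(\tau_n)=n$ on $\{\tau_n<\infty\}$ (the level $n$ is attained and cannot be overshot), which is the second assertion, and monotonicity yields the pathwise identity $\{\tau_n\le T\}=\{r(T)\ge n\}$ for each $T>0$. For the locality claim I would note that $1_{t<\tau_n}$ is $\Y_t$ measurable, hence independent of the index $j$, so the $j$th component of $z(t)1_{t<\tau_n}$ is $z_j(t)1_{t<\tau_n}$ and
\[
\int_0^\infty\left(\max_{1\le j\le N}|z_j(t)|^2\right)1_{t<\tau_n}\,dt
=\int_0^{\tau_n}\left(\max_{1\le j\le N}|z_j(t)|^2\right)\,dt=r(\tau_n)\le n
\]
almost surely $P$, which is exactly \eqref{L2}; thus $z(t)1_{t<\tau_n}$ is local under $P$.

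It remains to establish $P(\tau_n\le T)\to0$ as $n\to\infty$ for fixed $T>0$. Using $\{\tau_n\le T\}=\{r(T)\ge n\}$, the events $\{r(T)\ge n\}$ decrease in $n$ to $\{r(T)=\infty\}$, which is $P$-null by the signal process hypothesis, so continuity of the measure along this decreasing sequence gives $P(\tau_n\le T)=P(r(T)\ge n)\to0$. I do not expect a genuine obstacle in any individual step: each is an elementary consequence of the continuity and monotonicity of $r$ together with $r(t)<\infty$ a.s. The only point demanding care, and the conceptual heart of the lemma, is to keep the construction of $\tau_n$ entirely on the observation filtration $\Y_t$ and free of $P$, so that a single sequence of stopping times localizes $z$ for all admissible statistics simultaneously; this is what will later permit the Girsanov change of measure \eqref{girsanov} to be patched together uniformly across $P$.
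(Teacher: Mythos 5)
There is a genuine gap, and it sits exactly at the point you call the conceptual heart of the lemma: the claim that $\tau_n=\inf\{t\ge0:r(t)\ge n\}$ is a $\Y_t$ stopping time. The process $r(t)$ is adapted and nondecreasing, but on all of $\Omega$ it is only \emph{left}-continuous: on sample paths where the integral diverges, $r$ can jump from a finite value directly to $+\infty$ (this happens precisely when $\int_t^{s} \max_j|z_j|^2\,du=\infty$ for every $s>t$ while $r(t)<\infty$). For a nondecreasing process one has
$\{\tau_n\le t\}=\{r(t)\ge n\}\cup\{r(t)<n\text{ and }r(s)=\infty\text{ for all }s>t\}$,
and the second event --- blow-up immediately after time $t$ --- is determined by the observations strictly after $t$: it lies in $\Y_{t+}$ but in general not in $\Y_t$, since the raw canonical filtration on path space is not right-continuous and one can build $\Y_t$-progressive $z_j$ whose blow-up after $t$ is decided by the increments of $y$ after $t$. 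So your $\tau_n$ is only a $\Y_{t+}$ optional time, not a $\Y_t$ stopping time (your parenthetical ``pathwise identity'' $\{\tau_n\le T\}=\{r(T)\ge n\}$ fails on the same paths). Nor can you dismiss the offending paths as negligible: they are $P$-null for every $P$ under which $z$ is a signal process, but the lemma demands $\{\tau_n\le t\}\in\Y_t$ identically for the raw, uncompleted filtration --- and there is no fixed measure available to complete against, precisely because the statistics $P$ are constructed afterwards (Theorem \ref{Pexistence}) from consistent families on $\B_{\tau_n}$, and a single sequence $\tau_n$ must serve every candidate $P$ at once.

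This is exactly what the paper's proof is engineered to circumvent, and the fix is a truncation rather than a direct passage time. Set $z_n(t)=z(t)1_{r(t)\le n}$ and let $r_n(t)$ be the quantity in \eqref{sqint5} for $z_n$. Then $r_n(\infty)\le n$ identically on $\Omega$, so $r_n$ is finite, nondecreasing and continuous for \emph{every} sample path, whence $\tau_n=\inf\{t:r_n(t)\ge n\}$ satisfies $\{\tau_n\le t\}=\{r_n(t)\ge n\}\in\Y_t$: an honest $\Y_t$ stopping time. Under any $P$ making $z$ a signal process, $r$ is a.s. finite and continuous, $z_n$ agrees with $z$ before $\tau_n$, and $\tau_n$ coincides a.s. $P$ with your first passage time $\sigma_n^-=\inf\{t:r(t)\ge n\}$; from that point on your remaining arguments (locality of $z(t)1_{t<\tau_n}$, $r(\tau_n)=n$ on $\{\tau_n<\infty\}$, and $P(\tau_n\le T)=P(r(T)\ge n)\to0$) go through essentially verbatim. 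In short, everything after your construction is fine, but the construction itself must be replaced by the truncation device, because the untruncated first passage time need not be a $\Y_t$ stopping time.
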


\begin{proof}
Let  
$$
\sigma_n^- = \inf\left\{t\ge0: r(t)\ge n\right\}
\qquad\text{and}\qquad
\sigma_n^+ = \inf\left\{t\ge0: r(t)> n\right\}.
$$
Then $\sigma_n^-\le\sigma_n^+$, $n\ge1$, are increasing in $n$, and, for each $n\ge1$, $\sigma_n^\pm$ are $\Y$ measurable. Moreover, $r(\sigma^-_n)\le n$ on $\sigma_n^-<\infty$, and $r(\sigma_n^-)=n$ if in addition $r(\sigma_n^-+)<\infty$. 
For $n\ge1$,
$$
z_n(t)=
\begin{cases}
z(t),\qquad & \text{if }r(t)\le n, \\
0,\qquad & \text{otherwise},
\end{cases}
$$
 is a $\B_t$ process. Let $z_{j,n}(t)$, $j=1,\dots,N$, be the corresponding $\Y_t$ processes, and let $r_n(t)$ be the quantity in \eqref{sqint5} corresponding to $z_n(t)$. Then $r_n(\infty)\le n$  identically on $\Omega$.  Since $r_n(t)$ is a $\Y_t$ process with continuous sample paths identically on $\Omega$,
$$\tau_n=\inf\left\{t\ge0: r_n(t)\ge n\right\},\qquad n\ge1,$$
 are $\Y_t$ stopping times. 

If $z(t)$ is a signal process under $P$, then $r(t)< n$ for $t<\sigma_n^-$, $\sigma_n^-\to\infty$ as $n\to\infty$, and $\tau_n=\sigma_n^-$ almost surely $P$, hence $P(\tau_n\le T)\to0$ as $n\to\infty$, for all $T>0$, and $z(t)1_{t<\tau_n}$ is local under $P$. 
\end{proof}

We call $\tau_n$, $n\ge1$, a {\em localizing sequence for $z(t)$}. 
More generally, if $\tau$ is a $\Y_t$ stopping time and $P$ is such that $r(t)<\infty$ for $t<\tau$ and $r(\tau)=\infty$ on $\tau<\infty$, almost surely $P$, then the proof shows $z(t)1_{t<\tau_n}$ is local under $P$, $r(\tau_n)=n$ on $\tau_n<\infty$ and $\tau_n\le\tau$, both almost surely $P$, and $P(\tau_n\le T<\tau)\to0$ as $n\to\infty$ for all $T>0$. 

That $\tau_n$ is a $\Y_t$ stopping time, rather than a $\B_t$ stopping time, is crucial for the derivation of Theorem \ref{Qcapprox} and the filtering equations in the next section. That $\tau_n$ depends only on $z(t)$, and not $P$, is crucial for

\begin{theorem}
\label{Puniqueness}
There is at most one statistics $P$ corresponding to $p$ and $z(t)$.
\end{theorem}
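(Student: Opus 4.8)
The plan is to reduce uniqueness to the explicit Girsanov representation \eqref{girsanov}, which pins down a statistics completely once its driving process is local, and then to remove the locality restriction by stopping. Suppose $P$ and $P'$ are both statistics corresponding to $p$ and $z(t)$. By Lemma \ref{lemmalocal} fix a localizing sequence $\tau_n$ for $z(t)$. The crucial point, emphasized just before the theorem, is that the $\tau_n$ are $\Y_t$ stopping times depending only on $z$, not on the statistics, so the very same sequence localizes both $P$ and $P'$.

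First I would stop. Applying Theorem \ref{local} with $\tau=\tau_n$ produces statistics $P_{\tau_n}$ and $P'_{\tau_n}$, each corresponding to $p$ and the stopped process $z(t)1_{t<\tau_n}$, and agreeing with $P$ and $P'$ respectively on $\B_{\tau_n}$. Because $z(t)1_{t<\tau_n}$ is $\B_{\tau_n}$ measurable and, by the defining property of $\tau_n$, obeys the bound \eqref{L2} with constant $n$ almost surely, it is local under each of $P_{\tau_n}$ and $P'_{\tau_n}$. Hence the characterization \eqref{girsanov} applies to each, giving
$$P_{\tau_n}(A)=E^{Q}(\ell(\infty;z1_{t<\tau_n},y);A)=P'_{\tau_n}(A),\qquad A\in\B,$$
with $Q=p\times W$. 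The middle expression is built entirely from $p$, $W$, and $z$, with no reference to the original statistics, so $P_{\tau_n}=P'_{\tau_n}$ on all of $\B$. Restricting to $\B_{\tau_n}$ yields $P=P'$ on $\B_{\tau_n}$, for every $n$.

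It remains to upgrade agreement on each stopped $\sigma$-algebra to agreement on $\B$. Fix $t\ge0$ and $B\in\B_t$. A direct check from the definition of $\B_{\tau_n}$ shows $B\cap\{\tau_n>t\}\in\B_{\tau_n}$, so $P(B\cap\{\tau_n>t\})=P'(B\cap\{\tau_n>t\})$. Since Lemma \ref{lemmalocal} gives $P(\tau_n\le t)\to0$ and $P'(\tau_n\le t)\to0$ as $n\to\infty$, letting $n\to\infty$ yields $P(B)=P'(B)$. As $t$ and $B\in\B_t$ were arbitrary, $P=P'$ on the increasing union $\bigcup_{t\ge0}\B_t$, which is an algebra, hence a $\pi$-system, generating $\B=\sigma(\theta,\Y)$; the $\pi$--$\lambda$ theorem then forces $P=P'$ on $\B$.

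I expect the main obstacle to be careful bookkeeping rather than genuine depth. The two points to verify are that the stopped process $z(t)1_{t<\tau_n}$ is truly local under the spliced measure $P_{\tau_n}$ (the locality bound is a $\B_{\tau_n}$ event, so it transfers from $P$ to $P_{\tau_n}$ via the equality $P=P_{\tau_n}$ on $\B_{\tau_n}$ supplied by Theorem \ref{local}), and the measurability claim $B\cap\{\tau_n>t\}\in\B_{\tau_n}$ used to pass from the stopped $\sigma$-algebras back to $\B$. Both rest on $\tau_n$ being a $\Y_t$ (hence $\B_t$) stopping time chosen independently of the statistics, which is precisely the feature flagged in the remark preceding the theorem.
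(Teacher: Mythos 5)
Your proof is correct and follows essentially the same route as the paper's: stop with the localizing sequence $\tau_n$ from Lemma \ref{lemmalocal}, invoke Theorem \ref{local} to produce $P_{\tau_n}$ and $P'_{\tau_n}$, identify them via the Girsanov formula \eqref{girsanov}, and pass to the limit using $A\cap\{t<\tau_n\}\in\B_{\tau_n}$ together with $P(\tau_n\le t)\to0$. The only difference is that you spell out details the paper leaves implicit (the transfer of locality to $P_{\tau_n}$ through the $\B_{\tau_n}$-measurable bound, and the final $\pi$--$\lambda$ step), which is fine.
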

 
\begin{proof}
Suppose $P$ and $P'$ are statistics corresponding to $p$ and $z(t)$, and let $P_{\tau_n}$ and $P'_{\tau_n}$ be as in Theorem \ref{local}, corresponding to $p$ and $z(t)1_{t<\tau_n}$. Then,  by \eqref{girsanov}, $P_{\tau_n}=P'_{\tau_n}$, and $A\in \B_t$ implies $A_n=A\inter\{t<\tau_n\}\in\B_{\tau_n}$, so
$$P(A_n)=P_{\tau_n}(A_n)=P_{\tau_n}'(A_n)=P'(A_n).$$
Now send $n\to\infty$.
\end{proof}

By similar techniques, one can show the following. Let $\tau$, $\tau'$ be stopping times with $P$, $P'$ statistics corresponding to $p$ and $z(t)1_{t<\tau}$, $z(t)1_{t<\tau'}$ respectively. If $P'(\tau\le \tau')=1$, then $P'=P$ on $\B_{\tau}$. In particular, this implies: If $P_\tau$ and $P$ are statistics corresponding to $z(t)1_{t<\tau}$ and $z(t)$ respectively, then $P=P_\tau$ on $\B_\tau$.

Turning to existence, let $p\in S^N$ and let $z(t)$ be a $\B_t$ process.  If $z(t)$ is local under $Q=p\times W$, by \eqref{girsanov} there is a unique statistics $P$ corresponding to $p$ and $z(t)$, with $P$ sharing the same null events with $Q$ in $\B$.  

Now assume for each $n\ge1$, $\tau_n$ is a stopping time and $P_n$ is a statistics corresponding to $p$ and $z(t)1_{t<\tau_n}$. Assume further the sequence is {\em almost surely increasing} in the sense
\begin{equation}\label{asi}
P_{n'}(\tau_n\le\tau_{n'})=1,\qquad n<n'.
\end{equation}
Then $P_n$, $n\ge1$, are consistently defined on $\B_{\tau_n}$, $n\ge1$.

\begin{theorem}
\label{Pexistence}
If
\begin{equation}\label{taut}
P_n(\tau_n\le t)\to0\qquad\text{as}\qquad n\to\infty,
\end{equation}
for all $t\ge0$, then there is a statistics $P$ corresponding to $p$ and $z(t)$.
\end{theorem}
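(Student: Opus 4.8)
The plan is to construct $P$ by \emph{de-localizing} the consistent family $\{P_n\}$, reversing the localization of Theorem~\ref{local}. First I would record the consistency already noted before the statement: by \eqref{asi} and the remark following Theorem~\ref{Puniqueness} (applied with $\tau=\tau_n$, $\tau'=\tau_{n'}$, $n<n'$), the measures agree, $P_{n'}=P_n$ on $\B_{\tau_n}$. The basic measurability fact I would use repeatedly is that $A\in\B_t$ implies $A\inter\{t<\tau_n\}\in\B_{\tau_n}$; in particular, if $B\in\B_t$ and $B\subseteq\{t<\tau_n\}$, then $B\in\B_{\tau_n}$ and hence $P_m(B)=P_n(B)$ for all $m\ge n$.

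Next I would define the candidate on the generating algebra $\mathcal A=\bigcup_{t\ge0}\B_t$ (which satisfies $\sigma(\mathcal A)=\B$). For $A\in\B_t$ set $P(A)=\lim_{n\to\infty}P_n(A\inter\{t<\tau_n\})$. The limit exists because the sequence is nondecreasing with increments $P_{n'}(A\inter\{\tau_n\le t<\tau_{n'}\})\le P_{n'}(\tau_n\le t)=P_n(\tau_n\le t)$, which tends to $0$ by \eqref{taut}; the same bound applied to $\{s<\tau_n\le t\}$ shows $P(A)$ is independent of the level $t$ representing $A$, so $P$ is well defined and finitely additive on $\mathcal A$, with $P(\Omega)=\lim_n\bigl(1-P_n(\tau_n\le t)\bigr)=1$. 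I would also record the resulting agreement: for $B\in\B_t$ with $B\subseteq\{t<\tau_n\}$ one has $P(B)=P_n(B)$.

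The main obstacle is promoting this finitely additive $P$ on $\mathcal A$ to a countably additive probability on $\B$. This cannot be purely formal: finitely additive set functions on an algebra need not extend, and here no fixed dominating reference measure is available, since the singular regime ($\int_0^\infty\max_j|z_j|^2\,dt=\infty$, where $P\perp p\times W$) genuinely occurs and rules out any Radon--Nikodym/density construction against $p\times W$. I would resolve this using that $\Omega$ is Polish: each $P_n$ is Radon, and since $P(A)=\sup_n P_n(A\inter\{t<\tau_n\})$, the set function $P$ inherits inner regularity with respect to the compact class of compact subsets of $\Omega$. A compact-regular finitely additive probability is automatically countably additive (finite-intersection-property argument), so Carathéodory's theorem yields the extension. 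This is the tightness step, in the spirit of \cite{SV}, and it is where the topological structure of the path space is essential.

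Finally I would verify the statistics axioms for the extended $P$, in each case splitting at $\{t<\tau_n\}$ and using the agreement above together with \eqref{taut} to discard boundary terms. For (S1), $\{\theta=j\}\in\B_0$ gives $P(\theta=j)=\lim_n\bigl(p_j-P_n(\theta=j,\ \tau_n\le t)\bigr)=p_j$. For (S2), $\{r(t)=\infty\}\inter\{t<\tau_n\}\in\B_{\tau_n}$ has $P_n$-measure zero, as $z(t)1_{t<\tau_n}$ is a signal process under $P_n$; hence $P$-measure zero, and since $P(t<\tau_n)=1-P_n(\tau_n\le t)\to1$ we get $\tau_n\uparrow\infty$ almost surely $P$ and $P(r(t)=\infty)=0$, so $z$ is a signal process under $P$. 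For (S3), with $w(t)=y(t)-\int_0^tz\,ds$ I would show $\ell(t;ie,w)$ is a $(P,\B_t)$ martingale: fixing $A\in\B_s$, $s<t$, the modulus $|\ell(\cdot;ie,\cdot)|=e^{|e|^2t/2}$ is deterministic, so the $\{\tau_n\le t\}$ contribution is $O(P_n(\tau_n\le t))\to0$; on $\{t<\tau_n\}$ one has $w=w_n$, and using $P=P_n$ on $\{s<\tau_n\}$-supported and $\{t<\tau_n\}$-supported functions together with the $(P_n,\B_t)$ martingale property of $\ell(\cdot;ie,w_n)$ (again dropping an $O(P_n(\tau_n\le t))$ term from $\{s<\tau_n\le t\}$) passes the martingale identity to $P$ as $n\to\infty$. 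Thus $w$ is a $(P,\B_t)$ Wiener process, $P$ is a statistics corresponding to $p$ and $z(t)$, and uniqueness is already Theorem~\ref{Puniqueness}.
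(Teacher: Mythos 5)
Your construction of $P$ on the algebra $\mathcal A=\bigcup_{t\ge0}\B_t$, its well-definedness and finite additivity, and your verification of (S1)--(S3) by splitting at $\{t<\tau_n\}$ are all correct, and that part parallels the paper, whose proof consists of citing \cite[1.3.5]{SV} for the extension and then checking (S3) by essentially your martingale-limit argument. The genuine gap is in the one step you try to prove from scratch: countable additivity. The compact class of compact subsets of $\Omega$ cannot work here, for two separate reasons. First, the compact-class criterion requires, for each $A\in\mathcal A$ and $\epsilon>0$, a sandwich $B\subseteq K\subseteq A$ with $B\in\mathcal A$, $K$ compact, and $P(A\setminus B)<\epsilon$; but no nonempty element of $\mathcal A$ is contained in any compact subset of $\Omega$. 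Indeed a nonempty $B\in\B_t$ is a cylinder $\pi_t^{-1}(B')$, where $\pi_t:\Omega\to\Omega_t=\{1,\dots,N\}\times C([0,t],\R^o)$ is the restriction map, so $B$ contains, along with each of its points, continuous extensions that are arbitrarily large at time $t+1$; such a set is never precompact in the topology of uniform convergence on compacts. Thus the inner regularity you claim $P$ ``inherits'' is unattainable whenever $P(A)>0$. Second, the compacts supplied by Radon-ness of $P_n$ on the full Borel $\sigma$-algebra $\B$ need not be $\B_{\tau_n}$-measurable, and the measures $P_m$ are only guaranteed to agree on $\B_{\tau_n}$, not on $\B$; so the masses $P_n(K)$ of such compacts are not consistent across the family, and the finite-intersection-property argument has no single measure to run on.

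The repair uses the projective structure of path space rather than compacts of $\Omega$, and it is exactly the content of the result the paper cites. Two observations close the gap. (i) Since $P_n(A\cap\{t<\tau_n\})$ is nondecreasing in $n$, the restriction of $P$ to $\B_m$ is an increasing setwise limit of countably additive measures and hence is itself countably additive, for each integer $m$; its pushforward $\nu_m$ under $\pi_m$ is then a Borel, hence Radon, probability on the Polish space $\Omega_m$. (ii) The correct compact class is $\mathcal K=\{\pi_m^{-1}(K):\ m\ \text{a positive integer},\ K\subseteq\Omega_m\ \text{compact}\}$: these cylinders lie in $\mathcal A$ itself, so the sandwich condition is immediate from (i) and Radon-ness of $\nu_m$; and $\mathcal K$ is a compact class by an Arzel\`a--Ascoli diagonal argument, using crucially that the levels are integers, so that any countable subfamily has levels either attaining a maximum or tending to infinity. (Cylinders over compacts with levels accumulating at a finite time do \emph{not} form a compact class: take $K_m$ the single path $\sin(1/(1-s))$ restricted to $[0,1-1/m]$.) With (i) and (ii), the compact-class criterion gives countable additivity on $\mathcal A$ and Carath\'eodory gives the extension to $\B=\sigma(\mathcal A)$; what you would then have written out is a proof of \cite[1.3.5]{SV}. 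As it stands, your argument does not establish the extension, which is the entire analytic content of the theorem.
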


\begin{proof}
This is a rephrasing of standard results.
Since $P_n$ are consistently defined on $\B_{\tau_n}$, there is a probability measure $P$ on $\B$ whose restriction to $\B_{\tau_n}$ is $P_n$  \cite[1.3.5]{SV}. It follows that $\theta$ is distributed according to $p$ under $P$, $P(\tau_n\le t)\to0$ as $n\to\infty$, and $z(t)$ is a signal process under $P$.

Let $e\in \R^o$.
Since $w_n(t)=y(t)-\int_0^tz_n(s)\,ds$ is a $(P_n,\B_t)$ Wiener process,  $\ell(t;ie,w_n)$ is a $(P_n,\B_t)$ martingale, hence $\ell(t\wedge\tau_n;ie,w)=\ell(t\wedge\tau_n;ie,w_n)$ is a $(P_n,\B_t)$ martingale. Since $P=P_n$ on $\B_{\tau_n}$, $\ell(t\wedge\tau_n;ie,w)$ is a $(P,\B_t)$ martingale. Now send $n\to\infty$.
\end{proof}

We use \eqref{taut} in \S\ref{stabiliz} and \S\ref{optimal} to establish existence of statistics corresponding to specific feedback controls.

Let $e(t)$ be a signal $\B_t$ process under a statistics $P$ corresponding to $p$ and $z(t)$. It is natural to define
\begin{equation}\label{itoPQ}
I(t;e,y)=\int_0^t\inner{e(s)}{dy(s)}=
\int_0^t\inner{e(s)}{z(s)}ds+\int_0^t \inner{e(s)}{dw(s)},
\end{equation}
$t\ge0$.
Then $I(t;e,y)$ and consequently $\ell(t;e,y)$ are continuous $\B_t$ processes.  By Ito's Lemma \cite{KaS}, it follows that
\begin{equation}\label{likeli}
\ell(t;e,y)=1+\int_0^t\ell(s;e,y)\inner{e(s)}{dy(s)},\qquad t\ge0,
\end{equation}
almost surely $P$. 
When $e(t)$ is a $\Y_t$ process, we can do better.

\begin{theorem}
\label{Qcapprox}
Let $P$ be a statistics corresponding to $p$ and $z(t)$. If $e(t)$ is a signal $\Y_t$ process under $P$, there is a continuous $\Y_t$ process $I(t;e,y)$ satisfying \eqref{itoPQ} almost surely $P$.
\end{theorem}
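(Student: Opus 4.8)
The plan is to realize $I(t;e,y)$ as an Ito integral of $e$ against $y$ built entirely inside the sub-filtration $\Y_t$, using the reference measure $Q=p\times W$ under which $y$ is a Wiener process, and then to match this $\Y_t$ process with the right side of \eqref{itoPQ} under $P$ by exploiting the mutual absolute continuity of $P$ and $Q$. First I would localize. Let $\tau_n$, $n\ge1$, be a localizing sequence (Lemma \ref{lemmalocal}) for the combined $\B_t$ process formed from $z$ and $e$, so that $\tau_n$ are $\Y_t$ stopping times, $P(\tau_n\le T)\to0$, and both $z(t)1_{t<\tau_n}$ and $e(t)1_{t<\tau_n}$ are local under $P$. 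Because $\tau_n$ is a $\Y_t$ stopping time, and this is precisely why the text insists on $\Y_t$ rather than $\B_t$ localization, the truncated integrand $e(t)1_{t<\tau_n}$ is again a $\Y_t$ process, so any integral built from it will remain $\Y_t$ adapted. With $z(t)1_{t<\tau_n}$ local, the Girsanov relation \eqref{girsanov} shows the statistics $P_{\tau_n}$ of Theorem \ref{local} is mutually absolutely continuous with $Q=p\times W$ on $\B_{\tau_n}$, and under $Q$ the process $y$ is a $(Q,\B_t)$ Wiener process, hence also a $(Q,\Y_t)$ Wiener process since $\theta$ and $y$ are independent with $y$ distributed as $W$.

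Next I would construct the integral under $Q$ and identify it by simple-process approximation. Since $y$ is a $\Y_t$ Wiener process under $Q$ and $e(t)1_{t<\tau_n}$ is a local $\Y_t$ process, the standard Ito integral $\int_0^t\inner{e(s)1_{s<\tau_n}}{dy(s)}$ is defined under $Q$ as a continuous $\Y_t$ martingale. Choose simple $\Y_t$ processes $e^{(k)}$ approximating $e(t)1_{t<\tau_n}$ in $L^2([0,\infty),dt)$ for almost every path; this single approximation forces convergence of all the integrals below under both $P$ and $Q$ at once. For a simple process the integral is the pathwise telescoping sum $\sum_i\inner{\xi_i}{y(t\wedge t_{i+1})-y(t\wedge t_i)}$, which is manifestly continuous, $\Y_t$ measurable, and, using the pathwise relation $y=\int_0^\cdot z\,ds+w$ from (S3), splits as $\int_0^t\inner{e^{(k)}}{z}\,ds+\int_0^t\inner{e^{(k)}}{dw}$. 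This last equality is an identity of the same sample paths, independent of the underlying measure.

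Then I would pass to the limit and patch. Under $Q$ the integrals $\int_0^t\inner{e^{(k)}}{dy}$ converge, uniformly on compacts in probability, to the $Q$-Ito integral; along a subsequence the convergence is uniform on compacts almost surely $Q$, so the limit is a continuous $\Y_t$ process. Because $P_{\tau_n}\sim Q$ on $\B_{\tau_n}$, the same sums converge in $P$-probability to that limit; simultaneously the split expression converges, since $\int_0^t\inner{e^{(k)}}{z}\,ds\to\int_0^t\inner{e}{z}\,ds$ by Cauchy-Schwarz and the pathwise $L^2(dt)$ approximation, while $\int_0^t\inner{e^{(k)}}{dw}$ converges in $P$-probability to the $(P,\B_t)$ Ito integral $\int_0^t\inner{e}{dw}$. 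Matching the two $P$-limits of the identical sums shows the continuous $\Y_t$ process equals the right side of \eqref{itoPQ} almost surely $P$ on $t<\tau_n$. Since the constructions agree on overlaps and $\tau_n\to\infty$ almost surely $P$, I would splice them into a single continuous $\Y_t$ process $I(t;e,y)$ satisfying \eqref{itoPQ} almost surely $P$.

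The main obstacle I anticipate is the measure-change step: ensuring that the $Q$-Ito integral, which is $\Y_t$ adapted by construction, genuinely coincides $P$-almost surely with the $P$-defined right side of \eqref{itoPQ}, whose Ito term is built against the $(P,\B_t)$ Wiener process $w$ and is a priori only $\B_t$ adapted. This hinges on the equivalence $P_{\tau_n}\sim Q$ on $\B_{\tau_n}$ from \eqref{girsanov}, which transfers $Q$-probability limits to $P$-probability limits, together with the fact that both the drift term and the Ito term arise as limits of one and the same family of pathwise simple-process sums.
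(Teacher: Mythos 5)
Your proposal is correct and takes essentially the same route as the paper: localize with the $\Y_t$ stopping times of Lemma \ref{lemmalocal}, use the Girsanov equivalence \eqref{girsanov} with $Q=p\times W$ to realize the integral as a $Q$ Ito integral against the $(Q,\Y_t)$ Wiener process $y$ (hence a continuous $\Y_t$ process), identify it under $P$ with the right side of \eqref{itoPQ}, and splice the localized pieces together (the paper does this last step by citing the completeness lemma \cite[4.3.3]{SV}). Your simple-process approximation just makes explicit the identification that the paper leaves implicit, and your joint localization of $(z,e)$ is a harmless variant of the paper's localization of $z$ alone, with $e$ handled by measure equivalence.
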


\begin{proof}
Assume first $z(t)$ is local under $P$. By the Girsanov theorem, the probability measure $Q=p\times W$ shares the same null events with $P$ in $\B$, and $y(t)$ is a $(Q,\B_t)$ Wiener process. 
Then $e(t)$ is a signal process under $Q$, hence the $Q$ Ito integral $I(t;e,y)=\int_0^t\inner{e(s)}{dy(s)}$ is a continuous $\Y_t$ process satisfying \eqref{itoPQ} on $t\ge0$, almost surely. 

By localizing, for each $n\ge1$, there is a continuous $\Y_t$ process $I_n(t;e,y)$ equal to \eqref{itoPQ} on $0\le t<\tau_n$, almost surely $P$.
Since for $n<n'$, $I_n(t;e,y)=I_{n'}(t;e,y)$ on $0\le t<\tau_n$, almost surely $P$, by the completeness lemma \cite[4.3.3]{SV}, there is a continuous $\Y_t$ process $I(t;e,y)$ equal to $I_n(t;e,y)$ on $0\le t<\tau_n$, hence equal to \eqref{itoPQ} for $t\ge0$, almost surely $P$. 
 \end{proof}
 
 \begin{lemma}
 \label{lemmadeltaj}
There is a statistics $P$ corresponding to $p$ and $z(t)$ iff there is a statistics $P_j$ corresponding to $\delta_j$ and $z_j(t)$, for each $j$ satisfying $p_j>0$, in which case 
\begin{equation}\label{sum}
P(A)=\sum_{j=1}^N p_jP_j(A)
\end{equation}
and $P(A\mid \theta=j)= P_j(A)$, for $A\in\B$.
\end{lemma}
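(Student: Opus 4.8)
The plan is to exploit two structural facts. First, $\{\theta=j\}$ is $\B_0$-measurable, so conditioning on $\theta$ commutes with the $\B_t$-martingale conditions (S1)--(S3) defining a statistics. Second, on $\{\theta=j\}$ the signal coincides with $z_j$, so the driving process $w(t)=y(t)-\int_0^t z(s)\,ds$ coincides there with $w_j(t)=y(t)-\int_0^t z_j(s)\,ds$. For the explicit formula \eqref{sum} it is cleanest to treat first the local case, where \eqref{girsanov} gives $P(A)=E^{Q}(\ell(\infty;z,y);A)$ with $Q=p\times W=\sum_j p_j(\delta_j\times W)$, and to pass to general signal processes by localizing as in Lemma \ref{lemmalocal} and Theorem \ref{Pexistence}.

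For the forward direction, given a statistics $P$ corresponding to $p$ and $z(t)$ and a $j$ with $p_j>0$, I would set $P_j(A)=P(A\mid\theta=j)$. Then $P_j(\theta=j)=1$, which is (S1) for $\delta_j$. Since $\{\theta=j\}\in\B_0\subset\B_s$ for every $s$, and $w=w_j$, $z=z_j$ on $\{\theta=j\}$, the $(P,\B_t)$-martingale property of $\ell(t;ie,w)$ restricts to the $(P_j,\B_t)$-martingale property of $\ell(t;ie,w_j)$: for $B\in\B_s$ one has $E^{P_j}(\ell(t;ie,w_j);B)=p_j^{-1}E^P(\ell(t;ie,w);B\inter\{\theta=j\})$, and applying the martingale property of $\ell(t;ie,w)$ over the $\B_s$-set $B\inter\{\theta=j\}$ gives (S3). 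Since $|z_j|^2\le\max_k|z_k|^2$, the signal condition transfers from $P$ to $P_j$, giving (S2). The identities \eqref{sum} and $P(A\mid\theta=j)=P_j(A)$ then follow from $P_j(\theta=k)=\delta_{jk}$ and the law of total probability.

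For the reverse direction, given statistics $P_j$ for each $j$ with $p_j>0$, I would define $P=\sum_{p_j>0} p_j P_j$. Because each $P_j$ is carried by $\{\theta=j\}$, (S1), the formula \eqref{sum}, and $P(A\mid\theta=j)=P_j(A)$ are immediate. For (S3) I would splice the martingale properties: for $B\in\B_s$, using $w=w_j$ on $\{\theta=j\}$ and the $(P_j,\B_t)$-martingale property of $\ell(\cdot;ie,w_j)$, $E^P(\ell(t;ie,w);B)=\sum_j p_j E^{P_j}(\ell(t;ie,w_j);B)=\sum_j p_j E^{P_j}(\ell(s;ie,w_j);B)=E^P(\ell(s;ie,w);B)$, so $w$ is a $(P,\B_t)$ Wiener process.

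The main obstacle is (S2), namely $\int_0^t\max_{1\le k\le N}|z_k(s)|^2\,ds<\infty$ almost surely $P$, since each hypothesis only controls $z_j$ under its own $P_j$, whereas the signal condition for $P$ involves every component simultaneously. I would resolve this by local equivalence: for each $j$ with $p_j>0$, the law of $y$ under $P_j$ is, via the Girsanov correspondence \eqref{girsanov} together with the localizing $\Y_t$ stopping times of Lemma \ref{lemmalocal}, locally equivalent to Wiener measure $W$, so the laws of $y$ under the various $P_j$ are pairwise locally equivalent. Local square-integrability of $z_k$ is a $\Y$-event, so its $P_k$-almost-sure validity (which is (S2) for $P_k$) propagates to $W$ and hence to every $P_j$ with $p_j>0$, whence $\int_0^t\max_k|z_k|^2\,ds<\infty$ holds almost surely under $P=\sum_j p_j P_j$. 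This step is cleanest in the local case, where $z$ satisfies \eqref{L2} and \eqref{girsanov} applies directly, forcing every $z_k$ to be local under $W$; the general case then follows by localizing $z$ and assembling the resulting consistent family through Theorem \ref{Pexistence}.
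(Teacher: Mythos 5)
Your forward direction is fine, and your reduction of the reverse direction to (S2) correctly isolates the crux; but the step you use to resolve it is false. The Girsanov correspondence \eqref{girsanov}, localized by the stopping times of Lemma \ref{lemmalocal}, does \emph{not} make the law of $y$ under $P_j$ locally equivalent to Wiener measure: the localizing times $\tau_n^{(j)}$ built from $\int_0^t|z_j|^2\,ds$ satisfy $P_j(\tau_n^{(j)}\le T)\to0$, but under $W$ they may converge to a finite time, so the equivalence $P_j\sim\delta_j\times W$ holds only on $\B_{\tau_n^{(j)}}$ and in the limit yields only the one-sided absolute continuity $P_j\ll W$ on each $\Y_T$. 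The reverse domination $W\ll P_j$, which is exactly what your propagation step ``$P_k$-a.s.\ $\Rightarrow$ $W$-a.s.\ $\Rightarrow$ $P_j$-a.s.'' requires, can fail. Concretely, take $N=2$, $\R^o=\R$, $z_1\equiv0$, and $z_2(t)=(y(t)-1)^{-1}$ (set to $0$ when $y(t)=1$). Then $P_1=\delta_1\times W$ is a statistics for $\delta_1$ and $z_1$, and there is a statistics $P_2$ for $\delta_2$ and $z_2$ under which $1-y$ is a three-dimensional Bessel process started at $1$: it never hits $0$, so $\int_0^t|z_2|^2\,ds<\infty$ a.s.\ $P_2$, and $y(t)-\int_0^tz_2\,ds$ is a Wiener process under $P_2$. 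But under $W$ the path $y$ hits $1$ with positive probability before time $t$, and after the hitting time $\int|z_2|^2\,ds$ diverges (law of the iterated logarithm). So the $P_2$-a.s.\ finiteness of $\int|z_2|^2$ does not propagate to $W$, and your mixture $P=\frac12P_1+\frac12P_2$ violates \eqref{sqint5} on $\{\theta=1\}$.

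The example shows the problem is not merely technical: under your reading of the hypothesis, in which each $P_j$ controls only its own component $z_j$, the ``if'' direction of the lemma is actually false, since in the example no statistics corresponding to an interior $p$ and $z(t)=z_\theta(t)$ can exist at all (its conditional law given $\theta=1$ would have to be $\delta_1\times W$ by Theorem \ref{Puniqueness}, contradicting (S2)). The paper's proof avoids this by keeping all components in play for every measure: the localizing sequence $\tau_n$ of Lemma \ref{lemmalocal} is built from $r(t)=\int_0^t\max_k|z_k(s)|^2\,ds$, the localized statistics $P_{n,j}$ correspond to $\delta_j$ and $z_j(t)1_{t<\tau_n}$ with this \emph{common} $\tau_n$, and existence of $P$ and of each $P_j$ is equated, via Theorem \ref{Pexistence}, with $P_n(\tau_n\le T)=\sum_jp_jP_{n,j}(\tau_n\le T)\to0$; consequently the statistics $P_j$ appearing in the lemma carry the local square-integrability of \emph{all} the components $z_k$, not of $z_j$ alone. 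Once (S2) for each $P_j$ is read this way, your ``main obstacle'' disappears -- $P=\sum_jp_jP_j$ inherits \eqref{sqint5} term by term -- and your remaining direct computations (the mixture definition, the martingale splicing for (S3), and \eqref{sum}) are correct and rather more elementary than the paper's projective-limit argument. As written, however, the local-equivalence claim is a genuine gap.
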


\begin{proof}
Let $\tau_n$, $n\ge1$, be a localizing sequence for $z(t)$, let $P_n$ be the statistics corresponding to $p$ and $z(t)1_{t<\tau_n}$, and let $P_{n,j}$ be the statistics corresponding to $\delta_j$ and $z_j(t)1_{t<\tau_n}$, for each $j=1,\dots,N$. Since
$$
P_n(\tau_n\le T) = E^{p\times W}(\ell(\tau_n;z,y);\tau_n\le T) 
= \sum_{j=1}^N p_jP_{n,j}(\tau_n\le T),
$$
by Theorem \ref{Pexistence}, $P$ exists iff $P_j$ exist for all $j$ satisfying $p_j>0$. For $A\in\B_T$ let $A_n=A\inter\{T<\tau_n\}\in\B_{T\wedge\tau_n}$. Since
$$
P_n(A_n) = E^{p\times W}(\ell(\tau_n;z,y);A_n) 
=\sum_{j=1}^N p_j P_{n,j}(A_n),
$$
\eqref{sum} follows.
\end{proof}

\section{Filtering Equations}
\label{filtering}

Here we derive the filtering equations, without any integrability or moment conditions on the signal $z(t)=z_\theta(t)$.  For background, see \cite{MHAD, FP, KV, KO, PR}.

Let $P$ be statistics corresponding to $p\in S^N$ and a $\B_t$ process $z(t)$. Then  $\int_0^t\inner{z_j(s)}{dy(s)}$ and the likelihoods
$$\ell_j(t)=\exp\left(\int_0^t\inner{z_j(s)}{dy(s)}-\frac12\int_0^t|z_j(s)|^2\,ds\right),$$
$j=1,\dots,N$, are continuous $\Y_t$ processes, and  $\ell(t)\equiv\ell(t;z,y)=\ell_\theta(t)$, $t\ge0$, almost surely.

Let
\begin{equation}\label{BR}
p_j(t)=\frac{\ell_j(t)p_j}{\ell_1(t)p_1+\dots+\ell_N(t)p_N},\qquad t\ge0,
\end{equation}
$j=1,\dots,N$, and let $\hat z(t)$ be given by \eqref{zhat}.
Then  $\hat z(t)$ is a $\Y_t$ process and $p(t)=(p_1(t),\dots,p_N(t))$ is a continuous $\Y_t$ process.

Let $\tau_n$, $n\ge1$, be a localizing sequence for $z(t)$. If $p_n(t)$ corresponds to $z(t)1_{t<\tau_n}$, then $p_n(t\wedge\tau_n)=p(t\wedge\tau_n)$, $t\ge0$, almost surely.

We show $p_j(t)$, as defined by \eqref{BR}, is the conditional probability that $\theta=j$ given $\Y_t$. Then \eqref{BR} is Bayes' rule.

\begin{theorem}
\label{theorem2} 
For $t\ge0$, $j=1,\dots,N$, and $A\in\Y_t$, 
\begin{equation}\label{cpj}
P(\{\theta=j\}\inter A)=E^P(p_j(t);A),
\end{equation}
and \eqref{condprob} is valid.
\end{theorem}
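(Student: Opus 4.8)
The identity \eqref{cpj} says precisely that $p_j(t)$ is a version of $E^P(1_{\{\theta=j\}}\mid\Y_t)$; since $p_j(t)$ is $\Y_t$-measurable, once \eqref{cpj} is established the conditional probability formula \eqref{condprob} follows at once from the defining property of conditional expectation. So the entire task is to verify \eqref{cpj}, and the plan is to prove it first when $z(t)$ is local and then remove this hypothesis by localization.

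For the local case I would change the base measure to $Q=p\times W$. When $z(t)$ is local, \eqref{girsanov} gives $P(B)=E^Q(\ell(\infty;z,y);B)$ for $B\in\B$; since $\ell(t)\equiv\ell(t;z,y)$ is a $(Q,\B_t)$ martingale with mean-one terminal value, the martingale property yields $E^Q(\ell(\infty;z,y)\mid\B_t)=\ell(t)$, so the restriction of $P$ to $\B_t$ has $Q$-density $\ell(t)$. Fix $A\in\Y_t$. Because $\{\theta=j\}\in\B_0$ and $\ell(t)=\ell_\theta(t)$ equals $\ell_j(t)$ on $\{\theta=j\}$, changing measure gives $P(\{\theta=j\}\inter A)=E^Q(\ell_j(t)1_A;\theta=j)$. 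The decisive point is that under $Q$ the variable $\theta$ is independent of $\Y$, while $\ell_j(t)1_A$ is $\Y_t$-measurable, so this expectation factors as $p_j\,E^Q(\ell_j(t);A)$. Running the same change of measure on the right-hand side, $E^P(p_j(t);A)=E^Q(\ell_\theta(t)p_j(t)1_A)$; partitioning over the value of $\theta$ and again invoking independence replaces $\ell_\theta(t)$ by $\sum_k p_k\ell_k(t)$, which cancels the denominator in \eqref{BR} and leaves $p_j\,E^Q(\ell_j(t);A)$ as well. The two sides agree. This step is essentially the Kallianpur--Striebel/Bayes computation and is the conceptual core.

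To treat a general signal process I would localize. Take a localizing sequence $\tau_n$ for $z(t)$ (Lemma \ref{lemmalocal}) and let $P_n$ be the statistics corresponding to $p$ and $z(t)1_{t<\tau_n}$ furnished by Theorem \ref{local}, so $P=P_n$ on $\B_{\tau_n}$; since $z(t)1_{t<\tau_n}$ is local under $P$ and hence under $P_n$, the local case applies to $P_n$. Write $p_{n,j}(t)$ for the corresponding Bayes weights. For $A\in\Y_t$ set $A_n=A\inter\{t<\tau_n\}$; as $\tau_n$ is a $\Y_t$ stopping time, $A_n\in\Y_{t\wedge\tau_n}$ and both $A_n$ and $\{\theta=j\}\inter A_n$ lie in $\B_{\tau_n}$. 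The local case applied to $P_n$ gives $P_n(\{\theta=j\}\inter A_n)=E^{P_n}(p_{n,j}(t);A_n)$, and on $A_n\subset\{t<\tau_n\}$ one has $p_{n,j}(t)=p_j(t\wedge\tau_n)=p_j(t)$. Since $p_j(t\wedge\tau_n)1_{A_n}$ is $\B_{\tau_n}$-measurable and $P=P_n$ there, both sides rewrite under $P$, yielding $P(\{\theta=j\}\inter A_n)=E^P(p_j(t);A_n)$. Letting $n\to\infty$, $A_n\uparrow A$ almost surely $P$, and monotone convergence on the left with dominated convergence on the right (using $0\le p_j(t)\le1$) give \eqref{cpj} in general.

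I expect the main obstacle to be the bookkeeping in this localization step rather than any single estimate: one must check that the truncated filter $p_{n,j}(t)$ coincides with $p_j(t)$ on $\{t<\tau_n\}$, that the relevant events and integrands are genuinely $\B_{\tau_n}$-measurable so the identification $P=P_n$ on $\B_{\tau_n}$ can be invoked, and that the sets $A_n$ exhaust $A$ almost surely. It is exactly the fact that $\tau_n$ is a $\Y_t$ stopping time depending only on $z(t)$, and not on $P$, that makes these manipulations legitimate.
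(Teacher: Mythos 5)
Your proposal is correct and follows essentially the same route as the paper: the Bayes/Kallianpur--Striebel computation under the reference measure $Q=p\times W$ (the paper runs it with a generic nonnegative $\Y_t$-measurable $\Phi$, applied to $\Phi=1_A$ and $\Phi=p_j(t)1_A$, which is exactly your two-sided change of measure), followed by localization with $\tau_n$, $P_n$, $A_n=A\inter\{t<\tau_n\}$, and the limit $n\to\infty$. The bookkeeping points you flag (agreement of the truncated filter with $p_j(t)$ on $\{t<\tau_n\}$, $\B_{\tau_n}$-measurability, $P=P_n$ on $\B_{\tau_n}$) are precisely the ones the paper relies on via Lemma \ref{lemmalocal} and Theorem \ref{local}.
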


\begin{proof} 
The validity of \eqref{cpj} for all $A\in\Y_t$ is the definition of \eqref{condprob}. For \eqref{cpj}, let  $\Phi$ be a nonnegative $\Y_t$ measurable random variable. 
Since $\theta$ and $y(t)$ are independent under $p\times W$, 
\begin{equation*}
\begin{split}
E^{p\times W}(\ell(t)\Phi;\theta=j) &=  E^{p\times W}(\ell_\theta(t)\Phi;\theta=j) \\
&=  E^{p\times W}(\ell_j(t)\Phi;\theta=j) \\
&=  E^{p\times W}(\ell_j(t)p_j\Phi),
\end{split}
\end{equation*}
hence
$$E^{p\times W}(\ell(t)\Phi)=E^{p\times W}\left(\sum_{k=1}^N\ell_k(t)p_k\Phi\right).$$
If $z(t)$ is local under $P$,  applying these with $\Phi=1_A$ and $\Phi=p_j(t)1_A$, by \eqref{girsanov},
\begin{equation*}
\begin{split}
P(\{\theta=j\}\inter A) &=   E^{p\times W}(\ell(t);\{\theta=j\}\inter A) \\
&=  E^{p\times W}(\ell_j(t)p_j; A) \\
&=   E^{p\times W}(\ell(t)p_j(t);A) \\
&=  E^{P}(p_j(t); A).
\end{split}
\end{equation*}
If $z(t)$ is not local under $P$, let $A_n=A\inter\{t<\tau_n\}$ and localize to get
\begin{equation*}
\begin{split}
P(\{\theta=j\}\inter A_n) 
&= P_n(\{\theta=j\}\inter A_n) \\
&= E^{P_n}(p_{j,n}(t); A_n) \\
&= E^{P_n}(p_{j}(t); A_n) \\
&= E^{P}(p_{j}(t); A_n),
\end{split}
\end{equation*}
then send $n\to\infty$.
\end{proof}

Since $p(t)$ is a bounded  $(P,\Y_t)$  martingale, $p(\infty)$ exists almost surely $P$ \cite{KaS}.

\begin{theorem}
\label{theorem3}
Let $\hat z(t)$ be given by \eqref{zhat}. Then 
\begin{equation}\label{inno}
\nu(t)=y(t)-\int_0^t \hat z(s)\,ds,\qquad t\ge0,
\end{equation}
is a $(P,\Y_t)$ Wiener process.
\end{theorem}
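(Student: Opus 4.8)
The plan is to verify the characterization of a $(P,\Y_t)$ Wiener process used throughout \S\ref{pols}: since $\nu$ is a continuous $\Y_t$ process with $\nu(0)=0$, it suffices to show that $\ell(t;ie,\nu)$ is a $(P,\Y_t)$ martingale for every $e\in\R^o$. That $\nu$ is a continuous $\Y_t$ process follows because $\hat z$ is itself a signal $\Y_t$ process: $|\hat z|\le\max_j|z_j|$ by convexity, so $\int_0^t\hat z(s)\,ds$ is well defined and continuous. The first ingredient I would record is the filtering identity $\hat z(t)=E^P(z(t)\mid\Y_t)$. Indeed $z(t)=z_\theta(t)=\sum_j z_j(t)1_{\theta=j}$ with each $z_j(t)$ being $\Y_t$ measurable, so by Theorem \ref{theorem2},
\[
E^P(z(t)\mid\Y_t)=\sum_{j=1}^N z_j(t)\,P(\theta=j\mid\Y_t)=\sum_{j=1}^N p_j(t)z_j(t)=\hat z(t).
\]

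Next I would relate $\nu$ to the $(P,\B_t)$ Wiener process $w(t)=y(t)-\int_0^tz(s)\,ds$ furnished by (S3). Writing $\nu(t)=w(t)+\int_0^t(z(s)-\hat z(s))\,ds$ and applying Ito's lemma to $m(t)=\ell(t;ie,\nu)$, the second-order term cancels the $\tfrac12|e|^2\,dt$ coming from the time factor, since the quadratic variation of $\inner{e}{\nu}$ equals that of $\inner{e}{y}$, namely $|e|^2t$ (the absolutely continuous part $\int\hat z\,ds$ contributes nothing). This yields
\[
m(t)=1+\int_0^t m(s)\,i\inner{e}{dw(s)}+\int_0^t m(s)\,i\inner{e}{z(s)-\hat z(s)}\,ds.
\]
The first integral is a $(P,\B_t)$ martingale, and $m(s)$ is bounded in modulus by $\exp(|e|^2t/2)$, so it presents no integrability difficulty.

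It then remains to test the $\Y_t$ martingale property: for $s\le t$ and $A\in\Y_s$ I would show $E^P(m(t)-m(s);A)=0$. The $\inner{e}{dw}$ term contributes $0$ because it is a $(P,\B_t)$ martingale increment and $A\in\Y_s\subset\B_s$. For the drift term, Fubini reduces matters to showing, for each $u\ge s$,
\[
E^P\!\left(m(u)\,i\inner{e}{z(u)-\hat z(u)}\,1_A\right)=0.
\]
This holds because $m(u)1_A$ is $\Y_u$ measurable (here $A\in\Y_s\subset\Y_u$ and $\nu(u)$ is $\Y_u$ measurable), so the tower property together with $\hat z(u)=E^P(z(u)\mid\Y_u)$ makes the two contributions cancel. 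Hence $m$ is a $(P,\Y_t)$ martingale and $\nu$ is a $(P,\Y_t)$ Wiener process.

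The main obstacle is integrability: since the theorem assumes no moment conditions on $z$, neither $\int_0^t|z|\,ds$ nor $E^P(z(t)\mid\Y_t)$ is a priori integrable, so the Fubini and tower-property steps are not literally justified as stated. I would circumvent this exactly as in \S\ref{pols}, by localizing along the $\Y_t$ stopping times $\tau_n$ of Lemma \ref{lemmalocal}: replacing $z$ by $z1_{t<\tau_n}$ makes it local, hence bounded in $L^2(P)$, so the drift term becomes genuinely integrable and all manipulations are valid on $\B_{\tau_n}$. One establishes the martingale identity for the stopped innovations and sends $n\to\infty$ using $P(\tau_n\le t)\to0$. The only point needing care is that $p(t)$, and hence $\hat z(t)$, agree with their localized versions up to $\tau_n$, which is precisely the compatibility $p_n(t\wedge\tau_n)=p(t\wedge\tau_n)$ noted before Theorem \ref{theorem2}.
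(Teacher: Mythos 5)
Your proposal follows the paper's proof essentially verbatim: write $\nu(t)=w(t)+\int_0^t(z(s)-\hat z(s))\,ds$, apply Ito's lemma to $\ell(t;ie,\nu)$, kill the $dw$ integral using the fact that $w$ is a $(P,\B_t)$ Wiener process and the integrand is bounded, and kill the drift term by testing against the bounded $\Y_r$-measurable factor $\ell(r;ie,\nu)1_A$ and invoking $\hat z(r)=E^P(z(r)\mid\Y_r)$, which is Theorem \ref{theorem2}. The only difference is that you make explicit the localization along the $\Y_t$ stopping times $\tau_n$ of Lemma \ref{lemmalocal} needed to justify the Fubini and tower-property steps in the absence of moment conditions — a point the paper's proof leaves implicit, though it is exactly the localization pattern used in the proofs of Theorems \ref{theorem2} and \ref{theorem6} — so your write-up is, if anything, more complete than the paper's.
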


\begin{proof} 
Since
$$\nu(t)=w(t)+\int_0^t(z(s)-\hat z(s))\,ds,$$
almost surely $P$, by Ito's Lemma \cite{KaS}, $\ell(t;ie,\nu)$ satisfies
$$
\ell(t;ie,\nu) = \ell(s;ie,\nu) 
+ i\int_s^t\ell(r;ie,\nu)\inner{e}{dw(r)} 
+ i\int_s^t\ell(r;ie,\nu)\inner{e}{z(r)-\hat z(r)}\,dr,
$$
almost surely $P$. Taking the expectation of both sides over $A\in\Y_s$,
since $w$ is a $(P,\B_t)$ Wiener process, the expectation of the Ito integral vanishes [the integrand is local on finite time intervals].
Since  $A\in\Y_{r}$ and $\ell(r;ie,\nu)$ is $\Y_r$ measurable,  the expectation of the second integral also vanishes. Thus
$$E^P(\ell(t);ie,\nu) ;A)=E^P(\ell(s);ie,\nu);A)$$
for $A\in\Y_s$. This shows $\ell(t;ie,\nu)$ is a $(P,\Y_t)$ martingale for $e\in \R^o$, hence $\nu(t)$ is a $(P,\Y_t)$ Wiener process. 
\end{proof}

It follows that for any signal $\Y_t$ process $e(t)$, 
$$\int_0^t\inner{e(s)}{d\nu(s)}= \int_0^t\inner{e(s)}{dw(s)}+ \int_0^t\inner{e(s)}{z(s)-\hat z(s)}\,ds,$$
$t\ge0$, almost surely $P$.

\begin{theorem}
\label{theorem4}
Under $P$, the conditional probabilities satisfy
\begin{equation}\label{filter}
p_j(t)=p_j+\int_0^tp_j(s)\inner{z_j(s)-\hat z(s)}{d\nu(s)},
\end{equation}
$t\ge0$, $j=1,\dots,N$, almost surely $P$.
\end{theorem}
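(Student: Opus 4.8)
The plan is to derive \eqref{filter} by applying Ito's Lemma to Bayes' rule \eqref{BR}, written as $p_j(t)=\ell_j(t)p_j/L(t)$ with $L(t)=\sum_{k=1}^N\ell_k(t)p_k$, and then rewriting the outcome in terms of the innovation $\nu$ of Theorem \ref{theorem3}. Because the signal $z(t)$ need not be local under $P$, I would first reduce to the local case via a localizing sequence $\tau_n$ for $z(t)$: since $p_n(t\wedge\tau_n)=p(t\wedge\tau_n)$ almost surely, it suffices to establish \eqref{filter} on $[0,\tau_n)$ for each $n$ and then send $n\to\infty$, using $P(\tau_n\le T)\to0$ from Lemma \ref{lemmalocal}.

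In the local case, the first step is to show $\ell_j(t)=1+\int_0^t\ell_j(s)\inner{z_j(s)}{dy(s)}$, which is exactly \eqref{likeli} with $e=z_j$; since $z_j$ is a $\Y_t$ process, Theorem \ref{Qcapprox} guarantees the Ito integral admits a continuous $\Y_t$ version, so in differential form $d\ell_j=\ell_j\inner{z_j}{dy}$. Multiplying by $p_j$, summing, and using $\sum_k p_k\ell_k z_k=L\hat z$ (the content of \eqref{zhat} combined with \eqref{BR}) yields $dL=L\inner{\hat z}{dy}$.

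The second step is the Ito computation for the quotient. Under $P$ the observation $y=\int_0^t z\,ds+w$ is a continuous semimartingale whose martingale part is the Wiener process $w$, so $d\langle y^a,y^b\rangle=\delta_{ab}\,dt$. With this, $d(1/L)=-(1/L)\inner{\hat z}{dy}+(1/L)|\hat z|^2\,dt$ and the cross term is $d\langle \ell_jp_j,1/L\rangle=-(\ell_jp_j/L)\inner{z_j}{\hat z}\,dt$. Applying the product rule to $p_j=(\ell_jp_j)\cdot(1/L)$ and collecting terms, the drift and diffusion combine into $dp_j=p_j(t)\inner{z_j-\hat z}{dy-\hat z\,dt}$. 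Since $d\nu=dy-\hat z\,dt$ by \eqref{inno}, this is precisely \eqref{filter}, and sending $n\to\infty$ finishes the argument.

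I expect the main obstacle to be bookkeeping rather than depth: the one point that must come out right is that the second-order term in $d(1/L)$ together with the cross-variation $d\langle \ell_jp_j,1/L\rangle$ cancel the spurious $|\hat z|^2$ and $\inner{z_j}{\hat z}$ contributions, leaving only the innovation $d\nu$; this cancellation rests entirely on the identity $d\langle y\rangle=I\,dt$. The localization is indispensable because no moment conditions are assumed on $z$, so the Ito integrals are guaranteed to behave only after stopping at $\tau_n$; the fact that $\tau_n$ is a $\Y_t$ (not merely $\B_t$) stopping time is what keeps $\ell_j$, $p_j(t)$, and the integrands adapted to $\Y_t$ and lets the passage to the limit go through.
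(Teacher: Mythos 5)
Your proof is correct and follows essentially the same route as the paper: the paper's (one-line) proof is precisely an application of Ito's Lemma to $\ell_j(t)$ and the Bayes quotient \eqref{BR} using \eqref{likeli}, with the driving noise written as the $(P,\B_t)$ Wiener process $w$ rather than as the semimartingale $y$ with $d\langle y\rangle = I\,dt$ --- an equivalent bookkeeping. Your explicit localization via $\tau_n$ together with $p_n(t\wedge\tau_n)=p(t\wedge\tau_n)$ is exactly the device the paper sets up in the paragraphs preceding the theorem.
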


\begin{proof} 
Apply Ito's Lemma to the processes $\ell_j(t)$ and $p_j(t)$ driven by the $(P,\B_t)$ Wiener 
process $w(t)$, together with  \eqref{likeli}. 
\end{proof}

Let $H$ be as in \eqref{calculusfact}. The following result, which says the time rate of change of the expected information $-E^P(H(p(t)))$, $t\ge0$, is the mean square conditional variance of the signal $z(t)$, holds as soon as statistics $P$ exist for the signal process $z(t)$, with no moment conditions imposed on the signal.

\begin{theorem}
\label{theorem6}
Let $\tau$ be a $\Y_t$ stopping time. Then 
\begin{equation}\label{meansqvar}
H(p)=E^P(H(p(\tau)))+\frac12E^P\left(\int_0^\tau|z(t)-\hat z(t)|^2\,dt\right).
\end{equation}
In particular, this is so for $\tau=\infty$.
\end{theorem}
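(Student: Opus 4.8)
The plan is to apply Itô's formula to $H(p(t))$, where $p(t)$ evolves by the filtering equation \eqref{filter} and $H$ is the entropy \eqref{calculusfact}, and then to take expectations at the stopping time $\tau$. First I would write the Itô differential of $H(p(t))$ as a first-order term $\sum_j(\partial H/\partial p_j)(p(t))\,dp_j(t)$ plus a second-order term built from the cross-variations $d\langle p_j,p_k\rangle$. By \eqref{filter}, $dp_j(t)=p_j(t)\inner{z_j(t)-\hat z(t)}{d\nu(t)}$ is driven by the innovations process $\nu$, which is a $(P,\Y_t)$ Wiener process by Theorem \ref{theorem3}; hence the first-order term is a $(P,\Y_t)$ local martingale, call it $M(t)$.

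For the second-order term I would again use that $\nu$ is a $(P,\Y_t)$ Wiener process, so \eqref{filter} gives the cross-variation $d\langle p_j,p_k\rangle = p_j(t)p_k(t)\inner{z_j(t)-\hat z(t)}{z_k(t)-\hat z(t)}\,dt$. Combining this with the Hessian $-\partial^2H/\partial p_j\partial p_k=\delta_{jk}/p_j$ from \eqref{calculusfact}, the entire second-order contribution collapses to the single drift $-\frac12\sum_j p_j(t)|z_j(t)-\hat z(t)|^2\,dt$. Thus $H(p(t))$ decomposes as
$$H(p(t)) = H(p) + M(t) - \frac12\int_0^t\sum_{j=1}^N p_j(s)|z_j(s)-\hat z(s)|^2\,ds,$$
where the subtracted term, call it $R(t)$, is nondecreasing in $t$.

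The main obstacle is justifying that $M$ contributes nothing in expectation at time $\tau$: since the signal $z(t)$ carries no moment bound, $M$ is only a local martingale, and moreover $\partial H/\partial p_j=-\log p_j-1$ blows up on the boundary of $S^N$. Two facts rescue this. The integrand of $M$ in the $j$-th coordinate is $-(p_j\log p_j+p_j)(z_j-\hat z)$, which is bounded in the $p$-variable because $p_j\log p_j\to0$, and $H$ itself is bounded, $0\le H\le\log N$. I would therefore localize $M$ by stopping times $\sigma_m\uparrow\infty$ making $M(t\wedge\sigma_m)$ a true martingale, apply optional stopping to obtain $E^P(M(\tau\wedge\sigma_m))=0$, and rearrange the decomposition into $E^P(H(p(\tau\wedge\sigma_m)))-H(p)+E^P(R(\tau\wedge\sigma_m))=0$. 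Letting $m\to\infty$, bounded convergence handles the $H$ term (using continuity of $p(t)$ and of $H$, and the almost sure existence of $p(\infty)$ noted before Theorem \ref{theorem3} when $\tau=\infty$), while monotone convergence handles the nondecreasing $R$ term. This yields $H(p)=E^P(H(p(\tau)))+E^P(R(\tau))$.

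It then remains to identify $E^P(R(\tau))$ with $\frac12E^P(\int_0^\tau|z(t)-\hat z(t)|^2\,dt)$. Since $z(s)=z_\theta(s)$ while $z_j(s)$ and $\hat z(s)$ are $\Y_s$-measurable, Theorem \ref{theorem2} (that $p_j(s)$ is the conditional probability of $\{\theta=j\}$ given $\Y_s$) gives $E^P(|z(s)-\hat z(s)|^2\mid\Y_s)=\sum_j p_j(s)|z_j(s)-\hat z(s)|^2$. Because $\tau$ is a $\Y_t$ stopping time, $\{s<\tau\}\in\Y_s$, so conditioning inside the time integral and applying Fubini converts the weighted sum back into $|z(s)-\hat z(s)|^2$, producing the stated identity \eqref{meansqvar}; the case $\tau=\infty$ is the specialization just obtained.
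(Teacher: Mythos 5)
Your proof is correct and follows essentially the same route as the paper's: It\^o's formula applied to $H(p(t))$ via the filtering equation \eqref{filter}, the Hessian identity \eqref{calculusfact} producing the drift $-\tfrac12\sum_{j}p_j|z_j-\hat z|^2$, elimination of the martingale term by localization, and the identical conditioning/Fubini step via Theorem \ref{theorem2} to convert $\sum_j p_j(t)|z_j(t)-\hat z(t)|^2$ into $|z(t)-\hat z(t)|^2$. The only difference is in the localization detail: the paper localizes the signal itself, replacing $\tau$ by $\tau\wedge\tau_n$ with the localizing sequence of Lemma \ref{lemmalocal} so that the It\^o integrand is local under $P$ and its expectation vanishes outright, whereas you localize the resulting local martingale generically (which requires taking $\sigma_m$ so that the stopped martingale is bounded, hence uniformly integrable, for optional stopping at the possibly unbounded $\tau$) and then use the same bounded/monotone convergence passage to the limit.
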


\begin{proof}
By replacing $\tau$ by $\tau\wedge\tau_n$ and sending $n\to\infty$, we may assume $z(t)$ is local.
With $\L$ is as in \eqref{gen}, \eqref{calculusfact} implies
\begin{equation}\label{d^2entropy}
-\L H=\frac12\sum_{j=1}^Np_j|z_j-\hat z|^2,
\end{equation}
where $\hat z=\sum_{j=1}^N p_jz_j$. 
Apply Ito's Lemma to $H(p(t))$, $0\le t<\tau$,  using  \eqref{filter}. Then the resulting Ito integral has an integrand that is local under $P$, hence its expectation vanishes. This yields 
\begin{equation*}
\begin{split}
H(p)  - E^P(H(p(\tau))) 
&= \frac12E^P\left(\int_0^{\tau}\sum_{j=1}^Np_j(t)|z_j(t)-\hat z(t)|^2\,dt\right) \\
&=  \frac12\sum_{j=1}^N\int_0^{\infty}E^P(p_j(t)|z_j(t)-\hat z(t)|^2;t<\tau)\,dt \\
&= \frac12\sum_{j=1}^N\int_0^{\infty}E^P(|z_j(t)-\hat z(t)|^2;t<\tau,\theta=j)\,dt \\
&= \frac12\int_0^\infty E^P(|z(t)-\hat z(t)|^2;t<\tau)\,dt \\
&= \frac12E^P\left(\int_0^{\tau}|z(t)-\hat z(t)|^2\,dt\right).
\end{split}
\end{equation*}
\end{proof}

\section{Stabilizability}
\label{stabiliz}

A  {\em control} is a $\Y_t$ process $u$ valued in $\R^i$. 

Let $x=(x_1,\dots,x_N)$ and $p=(p_1,\dots,p_N)$. If  there is a probability measure $P$ on $\B$ such that \eqref{L2control} holds almost surely $P$, then there is a continuous $\Y_t$ process $x_j(t)$ satisfying
\begin{equation}\label{xjeq}
\dot x_j(t)= A_j x_j(t)+B_ju(t),\qquad x_j(0)=x_j,
\end{equation}
$j=1,\dots,N$, almost surely $P$.  Then the state and signal processes corresponding to $u$ are $x_\theta(t)$ and $z(t)=z_\theta(t)=G_\theta x_\theta(t)$.

A control is {\em admissible at $(x,p)$} if there is a probability measure $P$ on $\B$ such that \eqref{L2control} holds almost surely $P$, and $P$ is the statistics (\S\ref{pols}) corresponding to $p$ and $z(t)$.  In this case, we call $P$ the {\em statistics corresponding to $(x,p,u)$.}  When there is no signal,  $G_\theta\equiv0$, every control is admissible at $(x,p)$, and $P=p\times W$.

Let $u$ be a control and let $z(t)$, $z'(t)$ be the corresponding signal processes starting from initial state ensembles $x$ and $x'$ respectively. Since
$$z'(t)-z(t)=G_\theta e^{A_\theta t}(x'_\theta-x_\theta),$$
the Girsanov theorem implies there is a statistics corresponding to $p$ and $z(t)1_{t<T}$ iff there is a statistics corresponding to $p$ and $z'(t)1_{t<T}$. Since this is true for all $T>0$, by Theorem \ref{Pexistence}, there is a statistics $P$ corresponding to $p$ and $z(t)$ iff there is a statistics $P'$ corresponding to $p$ and $z'(t)$. Since $P$ and $P'$ share the same null events in $\B_T$, $T\ge0$, \eqref{L2control} holds almost surely $P$ iff it holds almost surely $P'$. Thus $u$ is admissible at $(x,p)$ iff $u$ is admissible at $(x',p)$. This together with  Lemma \ref{lemmadeltaj} implies a control $u$ is admissible at a single $(x,p)$ with $p$ in the interior of $S^N$ iff $u$ is admissible at all $(x,p)$. Thus the class of controls admissible at $(x,p)$, with $p$ in the interior of $S^N$, does not depend on $(x,p)$, and \eqref{affine} follows from Lemma \ref{lemmadeltaj}.

\begin{theorem}
\label{minimal}
Assume $(A_j,B_j,C_j)$, $j=1,\dots,N$, are minimal. Then $J(x,p,u)$ finite implies $u$ is stabilizing at $(x,p)$.
 \end{theorem}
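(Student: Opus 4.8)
The plan is to reduce this stochastic statement to the deterministic LQ convergence result of \S\ref{lqr}, applied pathwise and then assembled over the regimes $j$ with $p_j>0$. First I would invoke \eqref{affine}: since $J(x,p,u)=\sum_{j=1}^N p_j J(x,\delta_j,u)$ with each summand nonnegative, finiteness of $J(x,p,u)$ forces $J(x,\delta_j,u)<\infty$ for every $j$ with $p_j>0$. Recalling that $P_j=P(\,\cdot\mid\theta=j)$ is the statistics corresponding to $(x,\delta_j,u)$ and that on $\{\theta=j\}$ the state and signal are $x_j(t)$, $z_j(t)=G_jx_j(t)$, this says
$$\frac12 E^{P_j}\Bigl(\int_0^\infty\bigl(|C_jx_j(t)|^2+|u(t)|^2\bigr)\,dt\Bigr)<\infty.$$
In particular the integrand inside the expectation is finite for $P_j$-almost every sample path.

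The heart of the argument is pathwise. Fix $j$ with $p_j>0$ and a sample $\omega$ at which $\int_0^\infty(|C_jx_j(t,\omega)|^2+|u(t,\omega)|^2)\,dt<\infty$. By \eqref{xjeq}, for $P_j$-almost every such $\omega$ the trajectory $t\mapsto x_j(t,\omega)$ solves $\dot x=A_jx+B_ju(\cdot,\omega)$ with $x(0)=x_j$, so $u(\cdot,\omega)$ is an admissible control for the deterministic $j$-th LQ regulator (it satisfies \eqref{L2control}) with finite infinite-horizon cost $J_j(x_j,u(\cdot,\omega))<\infty$. Since $(A_j,B_j,C_j)$ is minimal, the value function $U_j$ is positive and proper, and the dynamic programming inequality of \S\ref{lqr} gives $U_j(x_j(t,\omega))\to0$, hence $x_j(t,\omega)\to0$, as $t\to\infty$. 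Thus $x_j(t)\to0$ as $t\to\infty$, $P_j$-almost surely, for each $j$ with $p_j>0$.

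Finally I would combine over the regimes: since $P=\sum_{j:p_j>0}p_jP_j$ and the state on $\{\theta=j\}$ is $x_j$, the $P$-probability that $x(t)=x_\theta(t)$ fails to converge to $0$ is $\sum_{j:p_j>0}p_j\,P_j(x_j(t)\not\to0)=0$; that is, $u$ is stabilizing at $(x,p)$. The one point needing care is the pathwise passage: one must know that the conditional state process coincides $P_j$-almost surely with the deterministic solution driven by the realized control, so that the deterministic convergence result of \S\ref{lqr} — and, through the properness of $U_j$, the minimality hypothesis — can be invoked sample path by sample path. Everything else is bookkeeping via \eqref{affine} and Lemma \ref{lemmadeltaj}.
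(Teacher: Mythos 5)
Your proof is correct and is essentially the paper's own argument: a pathwise application of the deterministic LQ tail estimate, with minimality entering through positive definiteness of $K_j$ (equivalently, positivity and properness of $U_j(\xi)=\frac12\inner{K_j\xi}{\xi}$, which is the same fact). The only difference is economy: since finiteness of $J(x,p,u)$ already forces $\int_0^\infty(|C_\theta x(t)|^2+|u(t)|^2)\,dt<\infty$ almost surely under $P$ itself, the paper skips your decomposition through \eqref{affine}, the conditional statistics $P_j$, and the reassembly $P=\sum_j p_j P_j$, and instead applies \eqref{estimateK} with the random index $\theta$ in place: letting $T\to\infty$ there gives $\frac12\inner{K_\theta x(t)}{x(t)}\le\frac12\int_t^\infty(|C_\theta x(s)|^2+|u(s)|^2)\,ds\to0$ almost surely $P$, which accomplishes your pathwise step on all regimes at once.
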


\begin{proof}
\eqref{estimateK} implies 
$$\frac12\int_t^\infty\left(|C_\theta x(s)|^2+|u(s)|^2\right)\,ds \ge \frac12\inner{K_\theta x(t)}{x(t)}$$
almost surely $P$, which implies $x(t)\to0$ as $t\to\infty$, almost surely $P$.
\end{proof}

\begin{theorem}
\label{adapunif}
The system \eqref{stateeq1},\eqref{obser1} is uniformly stabilizable with feedback $F$ if and only if  it is adaptively stabilizable with feedback $F$. If $U^F$ and $V^F$ are the costs associated to the feedback \eqref{cep}, then \eqref{feedbackV} holds.
\end{theorem}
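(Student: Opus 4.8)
The plan is to prove the two implications separately and then read off the cost bound from the construction used for the forward direction. The implication ``adaptively stabilizable $\Rightarrow$ uniformly stabilizable'' is the specialization already noted in \S\ref{overview}: taking $p=\delta_j$ forces $p_k(t)\equiv\delta_{kj}$, so $\hat z(t)=z_j(t)=G_jx_j(t)$ and \eqref{cep} becomes the feedback $u=-FG_jx_j$, whose closed loop $\dot x_j=(A_j-B_jFG_j)x_j=\bar A_jx_j$ must be stabilizing for every initial $x_j$; hence each $\bar A_j$ is stable. The substance is the converse together with \eqref{feedbackV}.

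For the converse, I would fix $(x,p)$ and work under the reference measure $Q=p\times W$, under which $y$ is a Wiener process and $\theta$ is independent with law $p$. First I would solve the closed loop: the state ensemble $x_j(t)$ from \eqref{xjeq} with $u=-F\hat z$, the likelihoods and conditional probabilities $p_j(t)$ from Bayes' rule, and $\hat z(t)=\sum_k p_k(t)G_kx_k(t)$ as in \eqref{zhat}, form a coupled system of $\Y_t$ processes driven by $y$, with locally Lipschitz coefficients, so local existence and uniqueness are standard. The key observation is that, writing $X=(x_1,\dots,x_N)$, the ensemble obeys a linear equation $\dot X=\mathcal A(t)X$ whose coefficient matrix is bounded by a constant $M=M(\mu)$ uniformly in $t$ and in the path, because the coupling weights $p_k(t)$ lie in $[0,1]$ and $|A_j|,|B_j|,|F|,|G_k|$ are bounded by $1/\mu$. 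Hence $|X(t)|\le|X(0)|e^{Mt}$ deterministically, so the localizing quantity $r(t)=\int_0^t\max_j|z_j|^2\,ds$ is finite for every finite $t$. This makes the explosion hypothesis \eqref{taut} automatic, since $r\le C(t)$ on $[0,t]$ forces $\tau_n>t$ for all $n>C(t)$; thus Theorem \ref{Pexistence}, applied to the localized statistics produced by Girsanov on each $\{t<\tau_n\}$, yields the statistics $P$ corresponding to $(x,p,u)$, and the same bound gives \eqref{L2control}, so $u=-F\hat z$ is admissible. Uniqueness of $P$ is Theorem \ref{Puniqueness}, and uniqueness of the admissible control satisfying \eqref{cep} follows because $u$ and the ensemble determine each other.

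It remains to bound the cost and deduce stability, which is where the entropy identity enters. From $\dot x_j=\bar A_jx_j+B_jF(z_j-\hat z)$ and $\Re\bar A_j\le-\mu$, a Lyapunov estimate on $|x_j|^2$ gives the pathwise bound $\int_0^\infty|x_j|^2\,dt\le\mu^{-1}|x_j|^2+\mu^{-6}\int_0^\infty|z_j-\hat z|^2\,dt$. Averaging over $\theta$ and invoking Theorem \ref{theorem6} with $\tau=\infty$, which controls $E^P(\int_0^\infty|z_\theta-\hat z|^2\,dt)=2(H(p)-E^P(H(p(\infty))))\le2H(p)$, yields
$$E^P\left(\int_0^\infty|x_\theta|^2\,dt\right)\le\frac1\mu\sum_{j=1}^Np_j|x_j|^2+\frac2{\mu^6}H(p).$$
Then both cost terms reduce to this quantity: $|C_\theta x_\theta|^2\le\mu^{-2}|x_\theta|^2$, while $|u|^2=|F\hat z|^2\le\mu^{-4}\sum_kp_k(t)|x_k|^2$ by Jensen, and, since $p_k(t)=E^P(1_{\theta=k}\mid\Y_t)$ by Theorem \ref{theorem2} and $|x_k(t)|^2$ is $\Y_t$ measurable, the identity $E^P(p_k(t)|x_k(t)|^2)=E^P(|x_k(t)|^2;\theta=k)$ collapses the $p_k(t)$-weighted sum back to $E^P(|x_\theta(t)|^2)$. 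Adding $E^P(H(p(\infty)))\le H(p)$ gives \eqref{feedbackV} with a constant $c(\mu)$ polynomial in $1/\mu$; finiteness of $U^F=J(x,p,u)$ and Theorem \ref{minimal} then make $u$ stabilizing at $(x,p)$, completing adaptive stabilizability.

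The hard part will be the existence step, because the localizing time $\tau_n$ is governed by $\max_j|z_j|$, which sees the ``wrong-regime'' states $x_j$, $j\ne\theta$; under the feedback the control eventually switches off as $\hat z\to0$, so these may grow through the possibly unstable open-loop dynamics $A_j$ and need not tend to $0$. What rescues existence is that this growth is only linear-in-state with uniformly bounded coefficients, giving the deterministic no-explosion bound above, so that \eqref{taut} holds despite the wrong-regime growth; note that adaptive stabilizability asks only for $x_\theta\to0$, not for $x_j\to0$. By contrast the entropy identity Theorem \ref{theorem6} is precisely what converts the $\theta$-weighted conditional variance into the $H(p)$ term, and is the crux of the cost bound \eqref{feedbackV}.
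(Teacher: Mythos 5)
Your proof is correct, and it diverges from the paper's precisely at the step that both you and the paper identify as the crux: existence of the statistics $P$, hence admissibility of $u=-F\hat z$. The paper resolves its ``circular situation'' by verifying \eqref{taut} with the entropy bound \eqref{meansqvar} applied under the pre-limit statistics $P_n$, obtaining $nP_n(\tau_n\le T)\le c_2(\mu)\left(E^{P_n}(|x_\theta|^2)+H(p)\right)$ as in \eqref{last}. You instead observe that the closed loop is linear in the ensemble $X=(x_1,\dots,x_N)$ with coefficient matrix bounded uniformly in $t$ and in the path, because the weights $p_k(t)$ lie in $[0,1]$; hence $|X(t)|\le|X(0)|e^{Mt}$ pathwise under $Q$, the localizing quantity $r$ is dominated by a deterministic function of $t$, and $P_n(\tau_n\le T)=0$ for all large $n$ since $P_n$ and $Q$ share null events --- so \eqref{taut} holds trivially and the entropy plays no role in well-posedness. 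One point you should make explicit, since it is load-bearing: the bound on the coefficient matrix requires $p(t)\in S^N$ up to the explosion time under $Q$, which follows from the stochastic-exponential form of \eqref{filter} (each $p_j$ is $p_j(0)$ times a nonnegative exponential, and $1-\sum_jp_j$ solves a linear equation with zero initial data), or by defining $p(t)$ through Bayes' rule \eqref{BR}; this is standard but must be said. The entropy identity, Theorem \ref{theorem6}, then enters your argument only where it is indispensable, namely the infinite-horizon bound \eqref{feedbackV}, which you derive by a Gr\"onwall/Lyapunov estimate on $|x_j|^2$ in place of the paper's convolution representation \eqref{convo} plus Young's inequality (an equivalent computation), followed by the same collapse $E^P(p_k(t)|x_k(t)|^2)=E^P(|x_k(t)|^2;\theta=k)$. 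Comparing what each buys: your route is more elementary and cleanly decouples existence from cost estimation --- indeed it shows the feedback \eqref{cep} is admissible for \emph{any} $F$, with stability of $\bar A_j$ needed only for \eqref{feedbackV} and for stabilizability; the paper's route keeps every constant depending only on $\mu$ (your growth constant $M$ involves $\max_j|A_j|$, harmless here because the existence step is purely qualitative), and its entropy-based estimate is the one that would survive in settings where no pathwise linear-growth bound is available, such as the continuous-prior extension over a noncompact family of systems contemplated at the end of \S\ref{overview}.
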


\begin{proof}
If the system is adaptively stabilizable with feedback $F$, then by choosing $p=\delta_j$, $j=1,\dots,N$, we see it is uniformly stabilizable with feedback $F$. Conversely, suppose $\bar A_j=A_j-B_jFG_j$, $j=1,\dots,N$, are stable for some feedback $F$, and fix an initial condition $(x,p)$. 

Under $Q=p\times W$, $y$ is a Wiener process independent of $\theta$, and the system \eqref{zhat}, \eqref{cep}, \eqref{inno}, \eqref{filter}, \eqref{xjeq} of stochastic differential equations driven by $y$ has smooth coefficients, hence there is  a unique continuous $\Y_t$ process \eqref{dyna}
starting from $(x,p)$, satisfying \eqref{zhat}, \eqref{cep}, \eqref{inno}, \eqref{filter}, \eqref{xjeq}, and 
defined up to a $\Y_t$ stopping time $\tau\le\infty$ with \cite{HPM}
$$\max_{1\le j\le N}|x_j(\tau)|^2=\infty,\qquad \text{on }\tau<\infty,$$ 
almost surely $Q$.  

Let $z_j(t)=G_jx_j(t)1_{t<\tau}$, $j=1,\dots,N$, and define $u$ by \eqref{zhat}, \eqref{cep}.  Then $u$ and $z_j(t)$, $j=1,\dots,N$, are $\Y_t$ processes and \eqref{zhat}, \eqref{cep}, \eqref{xjeq} imply $r(\tau)=\infty$ on $\tau<\infty$ almost surely $Q$, where $r(t)$ is the quantity in \eqref{L2control}.

The goal is to show there are statistics $P$ corresponding to $p$ and $z(t)=z_\theta(t)$, with $\tau=\infty$ almost surely $P$.  Because the existence of $P$  depends on $\tau$ being infinite almost surely $P$, and $\tau$ being infinite almost surely presumes the existence of $P$, we seem to be stuck in a circular situation. What saves us is the entropy bound \eqref{meansqvar}.

Let $\tau_n$, $n\ge1$, be a localizing sequence for $u$ and let $r(t)$ be the quantity in \eqref{L2control}.
  Then $\tau_n$ is a $\Y_t$ stopping time such that 
$r(\tau_n)=n$ on $\tau_n<\infty$ and $\tau_n\le\tau$, both almost surely $Q$, and $Q(\tau_n\le T<\tau)\to0$ as $n\to\infty$, for all $T>0$. Fix $T>0$ and let $u_n(t)=u(t)1_{t<T\wedge\tau_n}$, $x_{j,n}(t)=x_j(t)1_{t<T\wedge\tau_n}$, and $z_{j,n}(t)=G_jx_{j,n}(t)$, $j=1,\dots,N$. Then $u_n$ is a control and $x_{\theta,n}(t)$ equals the state process corresponding to $u_n$ starting from $x_\theta$, on $0\le t<T\wedge\tau_n$. Since $z_n(t)=z_{\theta,n}(t)$ is  local under $Q$, there are statistics $P_n$ corresponding to $p$ and $z_n(t)$ such that $P_n$ shares the same null events with $Q$ in $\B$. Since $\tau_n$, $n\ge1$, is almost surely increasing \eqref{asi}, the statistics $P_n$ are consistently defined on $\B_{\tau_n}$, $n\ge1$.

Note
$$E^{P_n}(|x_\theta|^2)=\sum_{j=1}^N p_j|x_j|^2$$
does not depend on $n\ge1$.

Let $p_n(t)=(p_{1,n}(t),\dots,p_{N,n}(t))\in S^N$ be the corresponding conditional probabilities. Then 
$$(x_{1,n}(t),\dots,x_{N,n}(t),p_{1,n}(t),\dots,p_{N,n}(t))$$ 
is a solution of  \eqref{zhat}, \eqref{cep}, \eqref{inno}, \eqref{filter}, \eqref{xjeq} up to time $T\wedge\tau_n$. By uniqueness of solutions of stochastic differential equations, $x_j(t\wedge\tau_n)=x_{j,n}(t\wedge\tau_n)$,  $p_j(t\wedge\tau_n)=p_{j,n}(t\wedge\tau_n)$, $0\le t<T\wedge\tau_n$, almost surely $Q$, hence almost surely $P_n$, for each $j=1,\dots,N$. 

By Theorem \ref{theorem6},
\begin{equation}\label{entropybound}
E^{P_n}\left(\int_0^{T\wedge\tau_n} |z(t)-\hat z(t)|^2\,dt\right)\le H(p).
\end{equation}
With $\bar A_\theta=A_\theta-B_\theta FG_\theta$,  $x(t)=x_\theta(t)$ satisfies
\begin{equation}\label{linear2}
\dot x= A_\theta x+ B_\theta u= \bar A_\theta x+ B_\theta F(z-\hat z)
\end{equation}
for $t<T\wedge\tau_n$, hence
\begin{equation}\label{convo}
x(t)=e^{\bar A_\theta t}x_\theta+\int_0^te^{\bar A_\theta(t-s)}B_\theta F(z(s)-\hat z(s))\,ds
\end{equation}
for $t<T\wedge\tau_n$, almost surely $P_n$. 
Since $\bar A_\theta$ is stable, there is a $\mu>0$ such that $\Re \bar A_j\le -\mu$ and $1/\mu$ bounds $|B_j|$ and $|F|$. Then by Young's convolution inequality \cite{LL}, and \eqref{entropybound}, 
\begin{equation}\label{est1}
 E^{P_n}\left(\int_0^{T\wedge\tau_n}|x(t)|^2\,dt\right) \le c_1(\mu) \left(E^{P_n}(|x_\theta|^2)+ H(p)\right)
\end{equation}
for some constant $c_1(\mu)$ depending only on $\mu$. By \eqref{cep}, this implies
\begin{equation}\label{est2}
 E^{P_n}(r(T\wedge\tau_n)) \le c_2(\mu) \left(E^{P_n}(|x_\theta|^2)+ H(p)\right),
\end{equation}
where now $|G_j|\le1/\mu$ as well. Since  $r(\tau_n)=n$ on $\tau_n<\infty$ almost surely $Q$ hence almost surely $P_n$, 
\begin{equation}
\label{last}
nP_n(\tau_n\le T) = E^{P_n}(r(T\wedge\tau_n);\tau_n\le T) \\
\le c_2(\mu) \left(E^{P_n}(|x_\theta|^2)+ H(p)\right). 
\end{equation}
This proves \eqref{taut}, hence there are statistics $P$ corresponding to $p$ and $z(t)$ whose restriction to $\B_{\tau_n}$ is $P_n$. By \eqref{last},  $\tau=\infty$ almost surely $P$, hence $u$ is an admissible control. Replacing $P_n$ by $P$ in \eqref{est1}, \eqref{est2} and sending $n\to\infty$ then $T\to\infty$  implies \eqref{feedbackV}, where now $|C_j|\le1/\mu$ as well, hence $u$ is stabilizing at $(x,p)$. Uniqueness of the admissible control follows from uniqueness of solutions of the system \eqref{zhat}, \eqref{cep}, \eqref{inno}, \eqref{filter}, \eqref{xjeq}.
\end{proof}

\section{Explicit Solutions}
\label{optimal}

Given $f=f(x_1,\dots,x_N,p_1,\dots,p_N)$, let
\begin{equation}\label{Value1}
V^f(x,p)=\inf_u \left(\vphantom{\int}J(x,p,u)+E^P(f(0,p(\infty)))\right),
\end{equation}
where the infimum is over controls admissible at $(x,p)$.
When $f(0,p)=0$, $V^f=U$, and when $f(0,p)=H(p)$, $V^f=V$.
We derive a verification theorem suitable for the explicit supersolutions.

\begin{theorem}
\label{super}
Let $f$ be a $C^2$ nonnegative supersolution of the Bellman equation \eqref{bellman1}.
Then for each initial $(x,p)$, there is a unique admissible control $u$, with statistics $P$, satisfying
\begin{equation}\label{cep2}
u(t)=-\sum_{j=1}^NB_j^*\nabla_jf(x(t),p(t)),\qquad t\ge0,
\end{equation}
almost surely $P$, with corresponding cost
\begin{equation}\label{VU}
V^f(x,p)\le J(x,p,u)+E^P(f(0,p(\infty)))\le f(x,p).
\end{equation}
If $f$ is a $C^2$ nonnegative solution of \eqref{bellman1}, then for each $(x,p)$, $f(x,p)=V^f(x,p)$
 and the admissible control satisfying \eqref{cep2} is optimal at $(x,p)$.
\end{theorem}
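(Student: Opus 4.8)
The plan is to prove a stochastic verification theorem by applying Ito's lemma to $f(x(t),p(t))$ along the controlled trajectory and comparing its drift with the Hamiltonian $\H$. Under a control $u$ the state components $x_j(t)$ evolve by the noiseless equations \eqref{xjeq}, while the conditional probabilities $p_j(t)$ evolve by the filtering equations \eqref{filter} against the innovation Wiener process $\nu$ of Theorem \ref{theorem3}, whose quadratic variation is $d\inner{p_j}{p_k}=p_jp_k\inner{z_j-\hat z}{z_k-\hat z}\,dt$. Hence $x$ contributes no second-order term, the pure-$p$ second-order term is exactly $\L f\,dt$ with $\L$ as in \eqref{gen}, and the terms $\partial f/\partial p_j$ assemble into a local martingale against $d\nu$. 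Writing the Ito drift of $f(x(t),p(t))$ as $\mathcal{D}_uf=\sum_j\inner{\nabla_jf}{A_jx_j+B_ju}+\L f$ and the running cost as $L=\frac12\sum_jp_j|C_jx_j|^2+\frac12|u|^2$, minimizing over $u$ gives the key identity $\mathcal{D}_uf+L\ge\H(x,p,\nabla f)+\L f$, with equality precisely when $u$ is the feedback \eqref{cep2}, since $u^*=-\sum_jB_j^*\nabla_jf$ is the minimizer defining $\H$. Using $E^P(|C_\theta x|^2\mid\Y_t)=\sum_jp_j|C_jx_j|^2$ (Theorem \ref{theorem2}), $E^P(\int_0^TL\,dt)$ is the partial cost of \eqref{JT1}.

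Next I would construct the feedback control and show it is admissible. Because $f$ is $C^2$, the coefficient $-\sum_jB_j^*\nabla_jf$ is locally Lipschitz, so under $Q=p\times W$ the coupled system \eqref{zhat}, \eqref{cep2}, \eqref{inno}, \eqref{filter}, \eqref{xjeq} driven by $y$ has a unique maximal continuous $\Y_t$ solution up to an explosion time $\tau$, exactly as in Theorem \ref{adapunif}; localizing with a sequence $\tau_n$ for the control yields consistent statistics $P_n$. Applying Ito to $f(x(t\wedge\tau_n),p(t\wedge\tau_n))$ under $u^*$, the local martingale part has zero expectation and the supersolution inequality $\H+\L f\le0$ combines with the identity above (now an equality at $u^*$) to give $\mathcal{D}_{u^*}f\le -L$, hence the a priori cost bound $E^{P_n}(\int_0^{T\wedge\tau_n}L\,dt)\le f(x,p)$.

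The main obstacle is ruling out explosion, that is, $\tau=\infty$ almost surely. In Theorem \ref{adapunif} this rested on the linear convolution representation \eqref{convo}, which is unavailable here because $u^*$ is nonlinear. Instead I would convert the cost bound into control of the signal energy $r(t)=\int_0^t\max_j|G_jx_j|^2\,dt$: regime by regime the minimality estimate \eqref{estimateK1} bounds $\int|x_j|^2$ in terms of the $j$-th cost, the entropy identity \eqref{meansqvar} controls the conditional variance, and together with $E^{P_n}(\int_0^{T\wedge\tau_n}L\,dt)\le f(x,p)$ these yield $E^{P_n}(r(T\wedge\tau_n))\le c$ for a constant $c$ independent of $n$. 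Since $r(\tau_n)=n$ on $\{\tau_n<\infty\}$, this gives $nP_n(\tau_n\le T)\le c$, which is \eqref{taut}; Theorem \ref{Pexistence} then produces statistics $P$ with $\tau=\infty$ almost surely, so $u^*$ is admissible, and uniqueness follows from uniqueness for the SDE system. This breaks, in the spirit of Theorem \ref{adapunif}, the circular dependence between the existence of $P$ and the finiteness of $\tau$.

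With admissibility in hand I would run the Ito computation directly under $P$ (localizing and sending $n\to\infty$) to get $E^P(\int_0^TL\,dt)+E^P(f(x(T),p(T)))\le f(x,p)$. Since $f\ge0$ this forces $J(x,p,u^*)\le f(x,p)<\infty$, so by Theorem \ref{minimal} the control $u^*$ is stabilizing, $x(T)\to0$, while $p(T)\to p(\infty)$ as a bounded martingale; letting $T\to\infty$, using continuity of $f$, monotone convergence on the integral and Fatou on the boundary term gives $J(x,p,u^*)+E^P(f(0,p(\infty)))\le f(x,p)$, and since $V^f$ is the infimum over admissible controls this is \eqref{VU}. Finally, for a genuine solution the inequality $\H+\L f\le0$ becomes equality, so for an arbitrary admissible $u$ the identity $\mathcal{D}_uf+L\ge\H+\L f=0$ gives the reverse bound $f(x,p)\le E^P(\int_0^TL\,dt)+E^P(f(x(T),p(T)))$; discarding the trivial case $J(x,p,u)=\infty$ and passing to the limit as before, where the boundary term is handled by stabilization, continuity, and uniform integrability of $f(x(T),p(T))$ drawn from the finite-cost bound and \eqref{estimateK1}, yields $f(x,p)\le J(x,p,u)+E^P(f(0,p(\infty)))$. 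Taking the infimum over $u$ gives $f\le V^f$, which with \eqref{VU} forces $f=V^f$ and makes $u^*$ optimal.
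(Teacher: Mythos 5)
Your overall architecture coincides with the paper's: your drift inequality is exactly the paper's completion-of-squares estimate \eqref{ineq3}, the feedback process is constructed under $Q=p\times W$ up to an explosion time $\tau$, statistics $P_n$ are produced by localization and patched together via Theorem \ref{Pexistence}, and the limit passage to \eqref{VU} and the treatment of the solution case are the same. The genuine gap is in your non-explosion step. Because you localize the \emph{signal}, you must bound $E^{P_n}\bigl(\int_0^{T\wedge\tau_n}\max_j|G_jx_j(t)|^2\,dt\bigr)$, and you propose to get this from \eqref{estimateK1}, the entropy identity \eqref{meansqvar}, and the a priori cost bound. These tools cannot deliver it. The running cost weights regime $j$'s output by $p_j(t)$; equivalently, conditionally on $\theta=j$ it controls only $\int|C_jx_j|^2$ and $\int|u|^2$, and \eqref{estimateK1} converts this into a bound on $\int|x_j|^2$ only under the matching conditional measure $P_n(\cdot\mid\theta=j)$, while \eqref{meansqvar} controls only the variance of the realized signal about $\hat z$. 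None of these control the counterfactual states: under $P_n(\cdot\mid\theta=j)$ the state $x_k$, $k\ne j$, evolves by $\dot x_k=A_kx_k+B_ku$ with possibly unstable $A_k$, and a bound on $\int|u|^2$ does not bound $\int|x_k|^2$ uniformly in $T$; since $p_k(t)$ can vanish pathwise, the weighted bound cannot be turned into the unweighted one your $r(t)$ requires. So the claimed constant $c$ independent of $n$ does not follow by the route you describe. (A crude patch exists: variation of constants gives $\sup_{t\le T}|x_k(t)|^2\le C(T)\left(1+\int_0^T|u|^2\,ds\right)$ pathwise, and a $T$-dependent constant still yields \eqref{taut}; but that is not what you wrote, and it makes \eqref{estimateK1} and \eqref{meansqvar} irrelevant.)

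The paper avoids the difficulty entirely by localizing the \emph{control} rather than the signal: in its proof $\tau_n$ is a localizing sequence for $u$, and $r(t)$ is the quantity in \eqref{L2control}, namely $\int_0^t|u(s)|^2\,ds$. Since $\frac12|u|^2$ is part of the running cost and the penalty term in \eqref{ineq3} vanishes at the feedback \eqref{cep2}, the bound $\frac12E^{P_n}(r(T\wedge\tau_n))\le J_{T\wedge\tau_n}(x,p,u)\le f(x,p)$ is immediate, and $r(\tau_n)=n$ on $\{\tau_n<\infty\}$ gives $nP_n(\tau_n\le T)\le 2f(x,p)$, which is \eqref{taut}. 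This is precisely where Theorem \ref{super} is \emph{easier} than Theorem \ref{adapunif}: the supersolution supplies a cost bound up front, so neither the convolution representation \eqref{convo} nor the entropy bound is needed, contrary to your premise that their absence must be compensated. If you replace your signal-energy argument by this control-localization, the rest of your proof goes through essentially as the paper's (modulo the uniform-integrability point at $\sigma=\infty$ in the solution case, which the paper also treats informally).
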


\begin{proof}  
Given $u\in\R^i$, let
$$\L^uf=\L f + \sum_{j=1}^N\inner{\nabla_j f}{A_jx_j+B_ju}.$$
Let $u$ be any admissible control and let \eqref{dyna} be the corresponding processes satisfying \eqref{filter} and \eqref{xjeq}.
 Ito's Lemma applied to $f(t)\equiv f(x(t),p(t))$, $t\ge0$, yields
$$
E^{P}(f(\sigma))=f(x,p) + E^P\left(\int_0^{\sigma} \L^u f\,dt\right)
$$
for any $\Y_t$ stopping time $\sigma$ with $|x(t)|\le c$, $0\le t<\sigma$.
Let
$$
J_\sigma(x,p,u)=\frac12E^{P}\left(\int_0^{\sigma} (|u|^2+|C_\theta x|^2)\,dt\right).
$$
Since $f$ is a supersolution of \eqref{bellman1}, adding the last two equations implies 
\begin{equation}\label{ineq3} 
J_\sigma(x,p,u)  + E^{P}(f(\sigma)) 
\le 
f(x,p) + \frac12E^{P}\left(\int_0^{\sigma} \left|u+\sum_{j=1}^NB_j^*\nabla_jf\right|^2\,dt\right).  
\end{equation}
The rest of the proof is a repetition of that of Theorem \ref{adapunif}.  Under $Q=p\times W$, the system  \eqref{zhat}, \eqref{inno}, \eqref{filter}, \eqref{xjeq}, \eqref{cep2} of stochastic differential equations driven by $y$ has smooth coefficients, hence there is  a unique continuous $\Y_t$ process \eqref{dyna}
starting from $(x,p)$, satisfying  \eqref{zhat}, \eqref{inno}, \eqref{filter}, \eqref{xjeq},  \eqref{cep2} and 
defined up to a $\Y_t$ stopping time $\tau\le\infty$ with \cite{HPM}
$$\max_{1\le j\le N}|x_j(\tau)|^2=\infty,\qquad \text{on }\tau<\infty,$$ 
almost surely $Q$. 

Let $z_j(t)=G_jx_j(t)1_{t<\tau}$, $j=1,\dots,N$, and define $u(t)$ by \eqref{cep2} on $0\le t<\tau$ and $u(t)=0$ otherwise. Then $u(t)$ and $z_j(t)$, $j=1,\dots,N$, are $\Y_t$ processes and \eqref{zhat}, \eqref{cep}, and \eqref{xjeq} imply $r(\tau)=\infty$ on $\tau<\infty$ almost surely $Q$, where $r(t)$ is the quantity in \eqref{L2control}.

Let $\tau_n$, $n\ge1$, be a localizing sequence for $u(t)$.  
Then $\tau_n$ is a $\Y_t$ stopping time such that 
$r(\tau_n)=n$ on $\tau_n<\infty$ and $\tau_n\le\tau$,  almost surely $Q$, and $Q(\tau_n\le T<\tau)\to0$ as $n\to\infty$. 
 Using $f\ge0$ and \eqref{cep2}, \eqref{ineq3} with $\sigma=T\wedge\tau_n$,
$$\frac12E^{P_n}(r(T\wedge\tau_n))\le J_{T\wedge\tau_n}(x,p,u)\le f(x,p)$$
which implies  as before $u$ is an admissible control with corresponding statistics $P$ and satisfying \eqref{cep2}. Sending $n\to\infty$ then $T\to\infty$  in \eqref{ineq3} with  $\sigma=T\wedge\tau_n$ implies $J(x,p,u)\le f(x,p) <\infty$, hence $x(\infty)=0$,  hence  \eqref{VU}.

For the second statement, if $f$ is a solution of \eqref{bellman1}, then for any admissible control $u$,  \eqref{ineq3}  is an equality for all $\sigma$. Inserting $\sigma=\infty$ yields $J(x,p,u)  + E^{P}(f(0,p(\infty))) \ge f(x,p)$ for all $u$ which  implies $f=V^f$, and $u$ given by \eqref{cep2} is optimal at $(x,p)$. 
 \end{proof}

Given vectors $\upsilon_j$, $j=1,\dots,N$, and $p\in S^N$, let $\hat{\upsilon}=\sum_{j=1}^Np_j\upsilon_j$. Then
\begin{equation}
\label{variance}
\sum_{j=1}^N p_j|\upsilon_j-\hat \upsilon|^2=\sum_{j=1}^N p_j|\upsilon_j|^2-|\hat \upsilon|^2.
\end{equation}

Let $\upsilon_j=B^*_jK_jx_j$, $j=1,\dots,N$.
Inserting $f=U_{ce}$ into \eqref{bellman1} and using \eqref{algricatti}, we see $U_{ce}$ is a subsolution or supersolution of \eqref{bellman1} if and only if \eqref{variance}
is $\ge0$, for all $x_1,\dots,x_N,p_1,\dots,p_N$, or $\le0$, for all $x_1,\dots,x_N,p_1,\dots,p_N$. It follows that $U_{ce}$ is always a subsolution. If $U_{ce}$ is a supersolution and $N\ge2$, then setting $x_k=0$ for all $k\not=j$ yields $p_j(1-p_j)|B_j^*K_jx_j|^2=0$, hence $B_j^*K_j=0$. But this contradicts minimality of $(A_j,B_j,C_j)$. Hence $U_{ce}$ is never a supersolution unless $N=1$.

\begin{lemma}
\label{ineq2}
Let $G_j$ and $H_j$, $j=1,\dots,N$, be linear maps into $\R^o$ and $\R^i$ respectively. If there exists a linear map $F:\R^o\to\R^i$ of norm at most one such that $H_j=FG_j$, $j=1,\dots,N$, then
\begin{equation}\label{ineq}
\sum_{j=1}^Np_j\left|G_jx_j-\widehat{Gx}\right|^2 \ge \sum_{j=1}^Np_j\left|H_jx_j-\widehat{Hx}\right|^2
\end{equation}
for all $p\in S^N$ and all $x_j$, $j=1,\dots,N$. Conversely, if $N\ge2$ and the linear map $G_1$ is surjective, and \eqref{ineq} holds, there exists a linear map $F:\R^o\to\R^i$ of norm at most one such that $H_j=FG_j$, $j=1,\dots,N$. If equality holds in \eqref{ineq}, then $F:\R^o\to\R^i$ may be chosen a partial isometry.
\end{lemma}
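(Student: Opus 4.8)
The plan is to reduce the inequality \eqref{ineq}, which is quantified over the whole simplex $S^N$, to a family of pairwise inequalities, and then to use the surjectivity of $G_1$ to manufacture $F$. The forward direction is immediate: writing $g_j=G_jx_j$ and $\widehat{Gx}=\sum_k p_kg_k$, the hypothesis $H_j=FG_j$ gives $H_jx_j-\widehat{Hx}=F(g_j-\widehat{Gx})$, and since $\|F\|\le1$ each term obeys $|F(g_j-\widehat{Gx})|^2\le|g_j-\widehat{Gx}|^2$. Summing against the nonnegative weights $p_j$ yields \eqref{ineq} at once.

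For the converse, the first step is to specialize $p$ to be supported on a pair of indices $\{i,j\}$, say with $p_i=p_j=1/2$ and all remaining weights zero (a legitimate boundary point of $S^N$). The two-point case of the variance identity \eqref{variance} gives $\sum_k p_k|v_k-\hat v|^2=p_ip_j|v_i-v_j|^2$ for any vectors $v_k$, applied on both the $G$-side and the $H$-side. After cancelling $p_ip_j$, \eqref{ineq} collapses to
\begin{equation*}
\left|G_ix_i-G_jx_j\right|\ge\left|H_ix_i-H_jx_j\right|,\qquad i\ne j,
\end{equation*}
valid for all $x_i,x_j$; here $N\ge2$ is exactly what guarantees such a pair exists. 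Setting one of the two variables to zero gives in particular $|G_ix_i|\ge|H_ix_i|$ for every $i$.

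Next I would build $F$. Since $G_1$ is surjective, every $v\in\R^o$ equals $G_1x_1$ for some $x_1$, and I set $Fv=H_1x_1$. This is well defined because $G_1\xi=0$ forces $|H_1\xi|\le|G_1\xi|=0$; it is linear because $G_1$ and $H_1$ are; and it satisfies $\|F\|\le1$ because $|Fv|=|H_1x_1|\le|G_1x_1|=|v|$. By construction $H_1=FG_1$. For $j\ge2$ and arbitrary $x_j$, surjectivity of $G_1$ lets me choose $x_1$ with $G_1x_1=G_jx_j$; the pairwise inequality for the pair $(1,j)$ then has vanishing left-hand side, forcing $H_jx_j=H_1x_1=FG_1x_1=FG_jx_j$. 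Hence $H_j=FG_j$ for all $j$, as required.

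Finally, if equality holds identically in \eqref{ineq}, then the pairwise inequalities become equalities, so $|G_1x_1|=|H_1x_1|$ for all $x_1$; thus $|Fv|=|v|$ for all $v\in\R^o$, i.e. $F$ is an isometry, which in particular is a partial isometry. The only genuinely delicate points are the reduction to the pairwise form — making sure the boundary values $p_k=0$ are admissible and that the two-point variance collapses correctly — and the well-definedness and extension step for $F$; the hard part is really just recognizing that concentrating $p$ on pairs together with surjectivity of $G_1$ completely pins down $F$, after which everything is routine.
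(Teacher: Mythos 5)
Your proof is correct and follows essentially the same route as the paper's: both arguments reduce \eqref{ineq} to two-point distributions to get the pairwise bound $|G_ix_i-G_jx_j|\ge|H_ix_i-H_jx_j|$, factor $H_1$ through the surjective $G_1$ to obtain a contraction $F$, and then use the choice $G_1x_1=G_jx_j$ to force $H_j=FG_j$ for all $j$, with the equality case yielding an isometry. Your write-up merely makes explicit two points the paper leaves implicit (the well-definedness of $F$ and the partial-isometry conclusion), which is a presentational rather than substantive difference.
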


\begin{proof}
If $H_j=FG_j$, $j=1,\dots,N$, with the norm of $F$ at most one, then \eqref{ineq} is immediate. Conversely, assume \eqref{ineq}. Choosing $j$ and $x_k=0$ for $k\not=j$ implies $|G_jx_j|^2\ge |H_jx_j|^2$ hence there exists $F_j$ on $\R^o$ such that $H_j=F_jG_j$, $j=1,\dots,N$. 

Choosing $j$ and $p_1+p_j=1$, \eqref{ineq} yields $|G_jx_j-G_1x_1|^2\ge |F_jG_jx_j-F_1G_1x_1|^2$. Choosing $x_1$ such that $G_1x_1=G_jx_j$  implies $F_1G_jx_j=F_jG_jx_j=H_jx_j$, hence $F=F_1$.
\end{proof}

\begin{theorem}
$V_{ce}$ is a supersolution of the Bellman equation if there exists $F:\R^o\to\R^i$ of norm at most one satisfying \eqref{explicit}. Conversely, if $N\ge2$ and the signal is faithful and $V_{ce}$ is a supersolution, there exists $F:\R^o\to\R^i$ of norm at most one satisfying \eqref{explicit}. When this happens, the unique admissible control $u$ given by \eqref{cep} satisfies \eqref{VV}, and the system is adaptively stabilizable with feedback $F$. 
\end{theorem}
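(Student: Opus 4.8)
The plan is to reduce the supersolution question entirely to the algebraic inequality \eqref{ineq} of Lemma \ref{ineq2}, invoke that lemma in both directions, and then read off the dynamical conclusions from the verification Theorem \ref{super}. The first and decisive step is to evaluate both sides of \eqref{bellman1} at $f=V_{ce}$; as emphasized in the Overview, this is a purely algebraic computation resting on \eqref{algricatti}. Since the quadratic term in \eqref{cep1} is linear in $p$, the only contribution to the $p$-Hessian comes from $H$, so by \eqref{calculusfact} $\partial^2V_{ce}/\partial p_j\partial p_k=-\delta_{jk}/p_j$; substituting into \eqref{gen} collapses the double sum and gives $-\L V_{ce}=\frac12\sum_jp_j|z_j-\hat z|^2$, which is \eqref{d^2entropy}. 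On the other hand $\nabla_jV_{ce}=p_jK_jx_j$, and inserting $D_j=p_jK_jx_j$ into $\H$ from the Overview, the Ricatti identity \eqref{algricatti} turns $\frac12p_j|C_jx_j|^2+\inner{p_jK_jx_j}{A_jx_j}$ into $\frac12p_j|B_j^*K_jx_j|^2$, so that after applying the variance identity \eqref{variance} to the vectors $v_j=B_j^*K_jx_j$ one obtains $\H(x,p,\nabla V_{ce})=\frac12\sum_jp_j|B_j^*K_jx_j-\widehat{B^*Kx}|^2$. Hence $V_{ce}$ is a supersolution, that is $-\L V_{ce}\ge\H(x,p,\nabla V_{ce})$, precisely when
\[
\sum_{j=1}^Np_j\left|G_jx_j-\widehat{Gx}\right|^2\ge\sum_{j=1}^Np_j\left|B_j^*K_jx_j-\widehat{B^*Kx}\right|^2
\]
for all $x$ and all $p\in S^N$.

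This is \eqref{ineq} with $H_j=B_j^*K_j$, so both directions follow from Lemma \ref{ineq2}. If $F$ has norm at most one and satisfies \eqref{explicit}, then $H_j=FG_j$ and Lemma \ref{ineq2} gives the displayed inequality, so $V_{ce}$ is a supersolution. Conversely, when $N\ge2$ and the signal is faithful, some $G_j$ is surjective; relabelling so that $G_1$ is surjective (the inequality is insensitive to the labelling of the regimes), the supersolution property supplies \eqref{ineq}, and Lemma \ref{ineq2} produces an $F$ of norm at most one with $B_j^*K_j=FG_j$, which is \eqref{explicit}.

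For the dynamical conclusions I would apply Theorem \ref{super} to $f=V_{ce}$: it is nonnegative (each $K_j$ is positive definite and $H\ge0$ on $S^N$) and, on the interior of $S^N$, it is $C^2$. The feedback \eqref{cep2} prescribed by Theorem \ref{super} is $u(t)=-\sum_jB_j^*p_j(t)K_jx_j(t)=-\sum_jp_j(t)FG_jx_j(t)=-F\hat z(t)$ by \eqref{explicit} and \eqref{zhat}, which is exactly the certainty equivalent control \eqref{cep}. Thus Theorem \ref{super} yields a unique admissible $u$ of this form, shows it is stabilizing, so that the system is adaptively stabilizable with feedback $F$, and delivers \eqref{VU}, namely $V(x,p)\le J(x,p,u)+E^P(H(p(\infty)))\le V_{ce}(x,p)$ after noting $V_{ce}(0,p)=H(p)$. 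To pass to \eqref{VV} I would use that $p(t)$ is a bounded martingale and $H$ is concave, so $E^P(H(p(t)))$ decreases to $E^P(H(p(\infty)))$, upgrading the left-hand inequality of \eqref{VU}; the right-hand inequality at finite $t$ comes from \eqref{ineq3} with $\sigma=t$, whose final integral vanishes under \eqref{cep2}, together with $V_{ce}(x(t),p(t))\ge H(p(t))$.

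The genuinely delicate points, rather than the algebra, are two. First, Theorem \ref{super} is stated for $C^2$ functions, whereas the entropy Hessian $-\delta_{jk}/p_j$ blows up at $\partial S^N$; I would confine the application to the interior of $S^N$, observing from \eqref{BR} that $p(t)$ stays in the interior for all finite $t$ whenever $p$ does, so Ito's Lemma is applied only where $V_{ce}$ is smooth and the boundary is met only as $t\to\infty$, where it enters harmlessly through $E^P(H(p(\infty)))$. Second, I expect the reconciliation of the finite-$t$ form of \eqref{VV} with the $p(\infty)$-chain \eqref{VU} to be the main obstacle, since $E^P(H(p(t)))\ge E^P(H(p(\infty)))$ runs against the upper bound and must be absorbed by the finite-time supersolution estimate \eqref{ineq3}; careful bookkeeping of the running cost up to $t$ against $V_{ce}(x(t),p(t))$ is what makes both inequalities of \eqref{VV} hold simultaneously.
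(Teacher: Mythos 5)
Your proposal follows the paper's proof essentially step for step: inserting \eqref{cep1} into \eqref{bellman1} and using \eqref{algricatti} together with \eqref{d^2entropy} reduces the supersolution property to the variance inequality \eqref{var}; both directions of the equivalence then come from Lemma \ref{ineq2} (faithfulness supplying the surjective $G_j$, after relabelling); and the dynamical conclusions come from Theorem \ref{super}, with \eqref{cep2} collapsing to \eqref{cep} because $\nabla_jV_{ce}=p_jK_jx_j$ and \eqref{explicit} give $u=-\sum_jp_jFG_jx_j=-F\hat z$. All of this is correct and is exactly the paper's argument; the paper leaves the identification of \eqref{cep2} with \eqref{cep} implicit, as well as the restriction to the interior of $S^N$ where $V_{ce}$ is $C^2$, and your handling of both points is the right reading.

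The one genuine flaw is your treatment of \eqref{VV} at finite $t$. Your argument via \eqref{ineq3} with $\sigma=t$ and $V_{ce}(x(t),p(t))\ge H(p(t))$ yields
\begin{equation*}
J_t(x,p,u)+E^P(H(p(t)))\le V_{ce}(x,p),
\end{equation*}
where $J_t$ is the running cost up to time $t$, \emph{not} the full infinite-horizon cost $J(x,p,u)$ that appears in \eqref{VV}. The inequality with the full $J$ at finite $t$ is not merely unproved by your bookkeeping; it is generally false. At $t=0$ it reads $J(x,p,u)\le U_{ce}(x,p)$, which together with \eqref{lambda=0} would force $J=U_{ce}$; but tracking the equality version of \eqref{ineq3} along the feedback \eqref{cep2} and invoking Theorem \ref{theorem6} gives
\begin{equation*}
J(x,p,u)=U_{ce}(x,p)+\frac12E^P\left(\int_0^\infty\sum_{j=1}^Np_j(t)\left|F(z_j(t)-\hat z(t))\right|^2dt\right),
\end{equation*}
which strictly exceeds $U_{ce}$ whenever the signal carries information. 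So no amount of bookkeeping makes both inequalities of \eqref{VV} hold simultaneously with full $J$ at each finite $t$. The resolution is that \eqref{VV} must be read as the paper's proof actually produces it, namely as \eqref{VU} for $f=V_{ce}$, i.e.\ with $E^P(H(p(\infty)))$; your truncated inequality recovers exactly this upon letting $t\to\infty$, since $J_t\uparrow J$ and $E^P(H(p(t)))\to E^P(H(p(\infty)))$ by bounded convergence, $p(t)\to p(\infty)$ almost surely. With that reading, your proof is complete and coincides with the paper's.
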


\begin{proof}
Inserting \eqref{cep1} into \eqref{bellman1} and using \eqref{algricatti}, \eqref{d^2entropy}, we see $V_{ce}$ is a supersolution of \eqref{bellman1} if and only if 
\begin{equation}\label{var}
\sum_{j=1}^Np_j\left|G_jx_j-\widehat{Gx}\right|^2 \ge \sum_{j=1}^Np_j\left|B^*_jK_jx_j-\widehat{B^*Kx}\right|^2,
\end{equation}
holds for all $x_1,\dots,x_N,p_1,\dots,p_N$. If  there is a feedback $F:\R^o\to\R^i$ of norm at most one such that \eqref{explicit} holds, then \eqref{var} holds. Conversely, if \eqref{var} holds, then by Lemma \ref{ineq2}, there is an $F:\R^o\to\R^i$ of norm at most one such that \eqref{explicit} holds. The second statement follows from Theorem \ref{super}.
\end{proof}

\begin{theorem}
Certainty equivalence holds if  there exists a partial isometry $F:\R^o\to\R^i$ satisfying \eqref{explicit}. Conversely, if $N\ge2$ and the signal is faithful and certainty equivalence holds, there exists a partial isometry $F:\R^o\to\R^i$ satisfying \eqref{explicit}. When this happens, the cost of the feedback \eqref{cep} starting from $(x,p)$ equals $V_{ce}(x,p)$, and the system is adaptively stabilizable with feedback $F$. 

\end{theorem}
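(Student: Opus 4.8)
The plan is to reduce the assertion ``certainty equivalence holds'' to an \emph{exact} (not merely one-sided) version of \eqref{var}, and then to feed the resulting solution of \eqref{bellman1} into Theorem \ref{super}. First I would insert $f=V_{ce}$ from \eqref{cep1} into the Bellman equation \eqref{bellman1} exactly as in the preceding proof, but tracking equality rather than an inequality. Since the quadratic part $\frac12\sum_jp_j\inner{K_jx_j}{x_j}$ is affine in $p$, it is annihilated by $\L$, so \eqref{d^2entropy} gives $-\L V_{ce}=\frac12\sum_jp_j|G_jx_j-\widehat{Gx}|^2$. On the other side $\nabla_jV_{ce}=p_jK_jx_j$, and substituting into $\H$ while using the algebraic Riccati equation \eqref{algricatti} to rewrite $A_j^*K_j+K_jA_j$ cancels the $|C_jx_j|^2$ terms, leaving $\H(x,p,\nabla V_{ce})=\frac12\sum_jp_j|B_j^*K_jx_j-\widehat{B^*Kx}|^2$ after the variance identity \eqref{variance}. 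Hence $V_{ce}$ solves \eqref{bellman1}, i.e. certainty equivalence holds, if and only if equality holds in \eqref{var} for every $(x,p)$.

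Next I would invoke Lemma \ref{ineq2} with $H_j=B_j^*K_j$. For necessity, assuming $N\ge2$, the signal faithful, and equality in \eqref{var}, the equality clause of Lemma \ref{ineq2} (after relabeling so that the surjective $G_{j}$ is $G_1$) produces a partial isometry $F$ with $B_j^*K_j=FG_j$, which is \eqref{explicit}; moreover specializing \eqref{var} to $x_k=0$ for $k\neq j_0$ forces $|B_{j_0}^*K_{j_0}x|=|G_{j_0}x|$, so that on the surjective range of $G_{j_0}$ the map $F$ is in fact isometric on all of $\R^o$. For sufficiency, given a partial isometry $F$ satisfying \eqref{explicit}, write $P=F^*F$ for the orthogonal projection onto $(\ker F)^\perp$ and $v_j=G_jx_j-\widehat{Gx}$; then the right side of \eqref{var} equals $\sum_jp_j|Fv_j|^2=\sum_jp_j|Pv_j|^2$, so equality in \eqref{var} becomes the identity $\sum_jp_j|(I-P)v_j|^2=0$.

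The delicate point, and the step I expect to be the main obstacle, is precisely this: a general partial isometry yields only the inequality $|Fv_j|\le|v_j|$ (the supersolution case of the previous theorem), and to obtain equality one needs $F$ isometric on the span of the ranges of the $G_j$, i.e. $\mathrm{range}(G_j)\subseteq(\ker F)^\perp$ for all $j$. This is automatic when the signal is faithful, since then that span is all of $\R^o$ and the $F$ produced above is a genuine isometry; outside the faithful case the hypothesis ``partial isometry satisfying \eqref{explicit}'' must be read as carrying this isometry-on-the-relevant-subspace content, and I would make that explicit rather than leave it to the word ``partial isometry'' alone. Correctly aligning the contraction/isometry dichotomy with the supersolution/solution dichotomy is the heart of the argument.

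Finally, once $V_{ce}$ solves \eqref{bellman1}, the stated consequences follow from Theorem \ref{super}. Note $V_{ce}\ge0$ (each $K_j$ is positive definite and $H\ge0$) and $V_{ce}(0,p)=H(p)$, so Theorem \ref{super} gives $V=V^{V_{ce}}=V_{ce}$ and identifies the optimal feedback \eqref{cep2}; substituting $\nabla_jV_{ce}=p_jK_jx_j$ and then \eqref{explicit} collapses \eqref{cep2} to $u=-\sum_jp_jB_j^*K_jx_j=-F\sum_jp_jG_jx_j=-F\hat z$, which is exactly the certainty equivalent feedback \eqref{cep}, so the cost of \eqref{cep} from $(x,p)$ equals $V_{ce}(x,p)$. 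Since \eqref{explicit} gives $\bar A_j=A_j-B_jFG_j=A_j-B_jB_j^*K_j$, which is stable by the LQ theory of \S\ref{lqr}, the system is uniformly stabilizable with feedback $F$, and Theorem \ref{adapunif} upgrades this to adaptive stabilizability with feedback $F$. A secondary technical obstacle is that $H$, and hence $V_{ce}$, is $C^2$ only on the interior of $S^N$, since by \eqref{calculusfact} the Hessian entry $-\partial^2H/\partial p_j^2=1/p_j$ blows up at the boundary; applying Theorem \ref{super} therefore requires confining attention to trajectories keeping $p(t)$ in the interior, which the filtering dynamics \eqref{filter}, \eqref{BR} indeed do when $p$ starts there.
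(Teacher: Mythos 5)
Your proposal is correct and takes essentially the same route as the paper: the paper's entire proof of this theorem is that the argument of the preceding theorem goes through ``with \eqref{var} now an equality,'' i.e.\ insert $V_{ce}$ into \eqref{bellman1}, reduce via \eqref{algricatti}, \eqref{variance}, \eqref{d^2entropy} to equality in \eqref{var} for all $(x,p)$, invoke Lemma \ref{ineq2} for the equivalence with \eqref{explicit}, and obtain the cost identity and adaptive stabilizability from Theorem \ref{super} (the paper routes the stabilizability claim through Theorem \ref{super} directly, while you pass through uniform stabilizability and Theorem \ref{adapunif}; both work). The two caveats you flag are genuine points the paper leaves implicit: the forward implication does require $F$ to be isometric on the span of the ranges of the $G_j$ (Lemma \ref{ineq2} proves only the contraction inequality in that direction, and its partial-isometry clause addresses only the converse), and $V_{ce}$ is $C^2$ only on the interior of $S^N$, where the filtering dynamics \eqref{BR}, \eqref{filter} do keep $p(t)$ when it starts there.
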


\begin{proof}
Same proof as the previous Theorem except \eqref{var} is now an equality.
\end{proof}

\section*{Disclosure statement}

No potential conflict of interest was reported by the author.


\begin{thebibliography}{10}
\providecommand{\MR}{\relax\unskip\space MR }
\providecommand{\url}[1]{\normalfont{#1}}
\providecommand{\urlprefix}{Available at }

\bibitem{AK}
K.J. \AA{}str{\"o}m and P.R. Kumar, \emph{Control: A perpective}, Automatica 50
  (2014), pp. 3--43.

\bibitem{AM}
B.D.O. Anderson and J.B. Moore, \emph{Linear Optimal Control}, Prentice-Hall,
  1971.

\bibitem{YBS}
Y. Bar-Shalom and E. Tse, \emph{Dual effect, certainty equivalence, and
  separation in stochastic control}, IEEE Transactions on Automatic Control 19
  (1974), pp. 494--500.

\bibitem{DPB}
D.P. Bertsekas, \emph{Dynamic Programming and Optimal Control (4th ed.)},
  Athena Scientific, 2017.

\bibitem{BGW}
R. Bitmead, M. Gevers, and V. Wertz, \emph{Adaptive Optimal Control},
  Prentice-Hall, 1990.

\bibitem{RWB}
R.W. Brockett, \emph{Finite Dimensional Linear Systems}, Classics in Applied
  Mathematics, SIAM, 2015.

\bibitem{CTC}
C.T. Chen, \emph{Linear System Theory and Design, Third Edition}, Oxford
  University Press, 1999.

\bibitem{CIL}
M.G. Crandall, H. Ishii, and P.L. Lions, \emph{User's guide to viscosity
  solutions of second order partial differential equations}, Bull. Amer. Math.
  Soc. (N.S.) 27 (1992), pp. 1--67.

\bibitem{MHAD}
M.H.A. Davis, \emph{Nonlinear filtering and stochastic flows}, in \emph{Proc.
  {I}nt. {C}ongr. {M}ath.}, Berkeley, CA. IMU, 1986.

\bibitem{F}
A.A. Fel'dbaum, \emph{Optimal Control Systems}, Academic Press, 1965.

\bibitem{FP}
W.H. Fleming and E. Pardoux, \emph{Optimal control for partially observed
  diffusions}, SIAM Journal on Control and Optimization 20 (1982).

\bibitem{FS}
W.H. Fleming and H.M. Soner, \emph{Controlled Markov Processes and Viscosity
  Solutions}, Stochastic Modelling and Applied Probability Vol.~25,
  Springer-Verlag New York, 2006.

\bibitem{GS}
G.C. Goodwin and K.S. Sin, \emph{Adaptive Filtering Prediction and Control},
  Dover, 2009.

\bibitem{OH}
O. Hijab, \emph{Stabilization of Control Systems}, Springer, 1987.

\bibitem{IS}
P. Ioannou and J. Sun, \emph{Robust Adaptive Control}, Courier Corporation,
  2013.

\bibitem{KI}
K. Ito, \emph{Existence of solutions to the {HJB} equation under quadratic
  growth conditions}, J. Diff. Eqs. 176 (2001), pp. 1--28.

\bibitem{REK}
R.E. Kalman, \emph{Contributions to the theory of optimal control}, Boletin de
  la Sociedad Matematica Mexicana 5 (1960), pp. 102--119.

\bibitem{KaS}
I. Karatzas and S.E. Shreve, \emph{Brownian Motion and Stochastic Calculus},
  Graduate Texts in Mathematics, Springer, 2012.



\bibitem{AJK}
A.J. Krener, \emph{Stochastic {HJB} Equations and Regular Singular
  Points}, arXiv e-prints,  (2018), arXiv:1806.04120.
  
\bibitem{KKK}
M. Krstic, I. Kanellakopoulos, and P. Kokotovic, \emph{Adaptive Control:
  Stability, Convergence and Robustness}, John Wiley and Sons, 1995.

\bibitem{KV}
P.R. Kumar and P. Varaiya, \emph{Stochastic Systems: Estimation,
  Identification, and Adaptive Control}, Classics in Applied Mathematics, SIAM,
  2016.

\bibitem{KO}
T.G. Kurtz and D.L. Ocone, \emph{Unique characterization of conditional
  distributions in nonlinear filtering}, Ann. Prob. 16 (1988), pp. 80--107.

\bibitem{LL}
E.H. Lieb and M. Loss, \emph{Analysis}, American Mathematical Society, 2001.

\bibitem{HPM}
H.P. McKean, \emph{Stochastic Integrals}, Probability and mathematical
  statistics, Academic Press, 1969.

\bibitem{NA}
K.S. Narendra and A.M. Annaswamy, \emph{Stable Adaptive Systems}, Dover, 2005.

\bibitem{PR}
E. Pardoux and A. R\v{a}\c{s}canu, \emph{Stochastic Differential Equations,
  Backward SDEs, Partial Differential Equations}, Springer, 2014.

\bibitem{SB}
S. Sastry and M. Bodson, \emph{Nonlinear and Adaptive Control Design}, Dover,
  1989.

\bibitem{S}
E.D. Sontag, \emph{Mathematical Control Theory}, Texts in Applied Mathematics
  Vol.~6, Springer-Verlag New York, 1998.

\bibitem{SV}
D.W. Stroock and S.R.S. Varadhan, \emph{Multidimensional diffusion processes},
  Classics in Mathematics, Springer Verlag Berlin, New York, 1979.

\bibitem{T}
G. Tao, \emph{Adaptive Control Design and Analysis}, John Wiley and Sons, 2003.

\end{thebibliography}

\end{document}